\documentclass[12pt]{article}

\usepackage{amsmath,amssymb,amsthm,amsfonts,verbatim}
\usepackage{hyperref}
\usepackage{color}
\usepackage{epsfig}
\usepackage{enumerate}
\usepackage[all,2cell]{xy}
\CompileMatrices
\usepackage{microtype}
\usepackage[margin=1in]{geometry}

\theoremstyle{plain}
\newtheorem{theorem}{Theorem}
\newtheorem{lemma}[theorem]{Lemma}
\newtheorem{proposition}[theorem]{Proposition}
\newtheorem{corollary}[theorem]{Corollary}
\theoremstyle{definition}
\newtheorem{definition}[theorem]{Definition}

\newtheorem{question}[theorem]{Question}

\newcommand{\Z}{\mathbb{Z}}

\newcommand\T{\mathcal{T}}

\newcommand{\ov}{\overline}

\DeclareMathOperator{\im}{im}
\DeclareMathOperator{\Aut}{Aut}
\DeclareMathOperator{\Div}{Div}
\DeclareMathOperator{\Pic}{Pic}
\newcommand{\corr}{\leftrightsquigarrow}%{\longleftrightarrow}

\renewcommand{\epsilon}{\varepsilon}
\newcommand{\coloneq}{\mathrel{\mathop:}\mkern-1.2mu=}

\newcommand{\para}[1]{\medskip\noindent\textbf{#1.}}

\newcommand{\mathqedhereforthm}{}

\title{Rotor-routing and spanning trees on planar graphs}

\author{Melody Chan\thanks{The first and second authors gratefully acknowledge support from the National Science Foundation.}, \,Thomas Church\footnotemark[1], \,and Joshua A. Grochow}

\begin{document}
\maketitle

\begin{abstract}
The sandpile group $\Pic^0(G)$ of a finite graph $G$ is a discrete analogue of the Jacobian of a Riemann surface which was rediscovered several times in the contexts of arithmetic geometry, self-organized criticality, random walks, and algorithms. Given a ribbon graph $G$, Holroyd \emph{et al.} %(In and Out of Equilibrium 2, 2008)
used the ``rotor-routing'' model to define a free and transitive action of $\Pic^0(G)$ on the set of spanning trees of $G$. However, their construction depends \emph{a priori} on a choice of basepoint vertex. Ellenberg asked whether this action does in fact depend on the choice of basepoint. We answer this question by proving that the action of $\Pic^0(G)$ is independent of the basepoint if and only if $G$ is a planar ribbon graph.
\end{abstract}

%\doublespacing
\section{Introduction}
The abelian sandpile model and rotor-routing model are combinatorial models of various dynamics on graphs that were rediscovered several times in several different communities, ranging from combinatorics to self-organized criticality, arithmetic geometry, and algorithms. (For example, \cite{dhar,PDDK}; see \cite{rotor} for more details and references.) The stable configurations in the abelian sandpile model on a graph $G$ form a group called the {sandpile group} or the {critical group} of $G$; it is also known as the {Picard group} of the graph, which we will denote $\Pic^0(G)$ in analogy with the Picard group of a Riemann surface.   

It has long been known that the order of $\Pic^0(G)$ is equal to the number of spanning trees of $G$ (see, e.\,g., \cite{biggs} and references therein).  Indeed, this is a form of Kirchoff's Matrix--Tree Theorem.  A natural first question to ask, then, is whether there is a canonical bijection between the elements of $\Pic^0(G)$ and the spanning trees of $G$. 
%is realized in a canonical way by a free and transitive action of the sandpile group on the set of spanning trees. 

What does ``canonical'' mean here? At the very least, we should require that the bijection be invariant under automorphisms of $G$, for certainly it shouldn't depend on what we name the vertices and edges. But there is clearly no such bijection: many graphs, e.\,g.~a complete graph or an $n$-cycle, don't even have a distinguished spanning tree that could correspond to the identity element of $\Pic^0(G)$.  

So it is natural to ask instead for the next best thing: \emph{is there a canonical free, transitive action of the sandpile group on the set of spanning trees?}  In other words, is there a canonical $\Pic^0(G)$-torsor structure on the set of spanning trees of $G$?  By ``canonical,'' we still mean an action that is invariant under automorphisms of $G$.  Such an action would give a bijection between the sandpile group and the set of spanning trees, once we decide on a spanning tree that corresponds to the identity element.

But this is still too much to ask for. For example, consider the graph  $G$ with two vertices and $n$ parallel edges between them. The sandpile group $\Pic^0(G)$ is cyclic of order $n$, and each edge of $G$ is itself a spanning tree. So an automorphism-invariant action of $\Pic^0(G) = \Z/n\Z$ amounts to an automorphism-invariant cyclic ordering of the edges, which clearly does not exist. 

This example, and others like it, suggest that we might hope for a free transitive action that is canonically defined on $G$ once we've fixed the additional structure of a cyclic ordering of the edges incident to each vertex. A graph together with such a system of cyclic orderings is called a \emph{ribbon graph} (or sometimes ``combinatorial embedding''). We therefore arrive at the following question.

\begin{question}
\label{q:intro}
Is there a free and transitive action of $\Pic^0(G)$ on the spanning trees of $G$ that is invariant under automorphisms preserving a ribbon graph structure on $G$?
\end{question}
Holroyd \emph{et al.} \cite{rotor} gave one possible way to answer Question~\ref{q:intro}. Given a ribbon graph \emph{and} a choice of a basepoint vertex, they used rotor-routing to define a free and transitive action of $\Pic^0(G)$ on the spanning trees of $G$. This action is explained in Section~2.  Therefore an affirmative answer to Question~\ref{q:intro} would follow if one could prove that the basepoint in the construction of Holroyd \emph{et al} is unnecessary. In other words, we have the following question, which to our knowledge was first asked by Ellenberg \cite{MO}: does the rotor-routing action of $\Pic^0(G)$ on the spanning trees of $G$ depend on the choice of basepoint?

In this paper we prove that if $G$ is a \emph{planar} ribbon graph, then miraculously, the action defined by rotor routing is independent of the basepoint and hence canonical. Furthermore, we show that this characterization is tight: only planar ribbon graphs have this property. Our main theorem is thus:

\begin{theorem}
\label{theorem:main}
Let $G$ be a connected ribbon graph. The action of the sandpile group $\Pic^0(G)$ on the set $\T(G)$ of spanning trees is independent of the choice of basepoint vertex if and only if $G$ is a planar ribbon graph.
\end{theorem}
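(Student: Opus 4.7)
The theorem is an ``if and only if,'' so I would split the argument into two implications. For the forward direction (planar $\Rightarrow$ basepoint-independent), my plan is first to reduce to a local statement: since $G$ is connected, any two basepoints are joined by an edge-path, so it suffices to show that moving the basepoint across a single edge $vw$ does not alter the resulting action on $\T(G)$. To establish this, I would try to exhibit an intermediate, manifestly basepoint-independent construction of a torsor structure --- for instance a tour-based or Bernardi-style bijection between $\T(G)$ and elements of $\Pic^0(G)$ depending only on the planar ribbon structure (and/or its planar dual) --- and verify that the rotor-routing action from every basepoint agrees with it. Planar duality, which allows one to exchange the roles of vertices and faces of the ribbon embedding, is what makes such a symmetric construction available in the planar case and is fundamentally unavailable otherwise.

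For the reverse direction, assume $G$ is a non-planar ribbon graph. The goal is to produce basepoints $v_1, v_2$, a divisor $D \in \Pic^0(G)$, and a spanning tree $T \in \T(G)$ with $D\cdot_{v_1} T \neq D\cdot_{v_2} T$. The natural strategy is to localize the obstruction: reduce to a substructure witnessing non-planarity --- for example a topological $K_5$ or $K_{3,3}$ ribbon minor, or equivalently a handle in the embedding surface --- and verify the discrepancy there. One route is to argue that contracting or deleting an edge (while preserving the ribbon structure) sends a counterexample to a counterexample, so that the problem reduces to a finite list of small non-planar ribbon graphs on which the inequality can be checked by direct computation. A natural divisor to try is $D = [v_1] - [v_2]$ (or a simple cycle divisor) applied to a spanning tree $T$ whose complement contains the non-planar handle.

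The main obstacle, I expect, is the reverse direction. Rotor-routing is sensitive to the cyclic orderings in subtle ways, and the discrepancy between two basepoint actions must ultimately be detected by some invariant of a homological flavor --- intuitively measuring how chips ``wind around'' handles of the embedding surface when they are routed to different sinks. Identifying such an invariant, and showing that it is forced to be nonzero whenever the embedding has positive genus, is the crux. The forward direction is also delicate but feels more tractable, since once the right basepoint-independent model is in hand, the verification should follow from a careful but essentially bookkeeping argument comparing rotor configurations relative to adjacent basepoints.
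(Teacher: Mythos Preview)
Your proposal is a strategy sketch rather than a proof, and it diverges substantially from the paper's argument in both directions. Let me compare and then point out where your plan has a real gap.

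\textbf{Forward direction.} You share with the paper the reduction to adjacent basepoints. After that, the paper does \emph{not} pass through a Bernardi-type bijection or planar duality. Instead it introduces the notion of a \emph{reversible cycle} (Definition~\ref{def:reversible}): a cycle $C$ such that in any unicycle $(\rho,v)$ with $C(\rho)=C$, rotor-routing eventually reaches $(\ov\rho,v)$. Lemma~\ref{lemma:almostseparating} shows that separating cycles are reversible; planarity guarantees all cycles are separating. The link to basepoint-independence comes from Lemma~\ref{lemma:ongeodesic} and Proposition~\ref{prop:rotorscompute}, which show that reversing the cycle $C_e(T)$ in a unicycle is exactly what is needed to splice together rotor-routing processes with two different roots into one process. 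Your Bernardi/duality idea is a legitimate alternative (and was later carried out by Baker--Wang), but it requires building and verifying an entirely separate torsor; the paper's route stays inside rotor-routing.

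\textbf{Reverse direction --- here is the gap.} Your plan is to push a counterexample down through ribbon minors to $K_5$ or $K_{3,3}$ and check finitely many cases. This does not work as stated: there is no known functoriality of the rotor-routing torsor under contraction or deletion that would carry a basepoint discrepancy to the minor. Contraction changes cyclic orders and merges vertices in ways that scramble the rotor dynamics, and deletion can destroy the spanning tree you are acting on. You would need a lemma of the form ``if $D_{v_1}(T)\neq D_{v_2}(T)$ in $G$, then the same holds for suitable $D',T'$ in $G/e$ or $G\setminus e$,'' and no such statement is available. Your fallback, a ``homological invariant measuring winding around handles,'' is an intuition rather than a tool.

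The paper avoids this entirely by running the reversibility argument backwards. Lemma~\ref{lemma:ongeodesic}(b) shows that basepoint-independence forces reversibility to be preserved along the $\Pic^0(G)$-orbit on $\T(G)$; since the action is transitive and trivial cycles are always reversible, \emph{every} cycle must be reversible. Proposition~\ref{prop:planar_is_reversible} then shows, by a short direct argument with three mutually crossing edges, that a nonplanar ribbon graph always has an irreversible cycle. No minors, no case analysis on $K_5$ or $K_{3,3}$.

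In short: the unifying idea you are missing is \emph{reversibility of cycles in unicycles}, which serves as the bridge in both directions and replaces both your duality construction and your minor-reduction.
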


The proof is based on three key ideas. First, the rotor-routing action of the sandpile group on spanning trees can be partially modeled via \emph{rotor-routing on unicycles} (\cite[\S 3]{rotor}).
This is a related dynamical system with the property that rotor-routing becomes periodic, rather than terminating after finitely many steps. Rotor-routing on unicycles is described in Section~\ref{sec:reversibility}.
%, and its connection with the sandpile group is proved in Proposition~\ref{prop:rotorscompute}. 

The second main idea is that the independence of the sandpile action on spanning trees can be described in terms of \emph{reversibility of cycles}. We introduce the notion of reversibility (previously considered in \cite{rotor} only for planar graphs), and prove in Proposition~\ref{prop:reversibility2} that reversibility is a well-defined property of cycles in a ribbon graph. We establish a relation between reversibility and basepoint-independence in Section~\ref{sec:spanningtrees}. 

Third, reversibility is closely related to whether a cycle separates the surface corresponding to the ribbon graph into two components. We prove in Lemma~\ref{lemma:almostseparating} that these conditions are almost equivalent. Moreover, although they are not equivalent for individual cycles, we prove in Proposition~\ref{prop:planar_is_reversible} that \emph{all} cycles are reversible if and only if all cycles are separating, in which case the ribbon graph is planar.

%\jsay{Mention other possibly criteria for canonicity/naturality, as in Melody's email?}

\section{Definitions and notation} 
By a \emph{graph} we mean a connected and undirected graph, with multiple edges allowed but no self-loops. We write $V(G)$ and $E(G)$ for the vertex set and edge set of $G$, respectively. Throughout, $G$ denotes a \emph{ribbon graph}: a graph together with a cyclic ordering of the edges incident to each vertex. Ribbon graphs are equivalent to graphs embedded into orientable surfaces (hence their alternate name ``combinatorial embeddings''): from any ribbon graph one can reconstruct a closed orientable surface in which it is naturally embedded, and conversely the local orientation of the surface at a vertex gives the cyclic ordering (see, e.\,g., \cite[Theorem~3.7]{thomassen}). We denote by $\T(G)$  the set of spanning trees of $G$. When the graph is clear from context, we sometimes write $\T$ for $\T(G)$.

\para{The rotor-routing model} A \emph{rotor configuration} $\rho$ on a directed graph is an assignment to each vertex $y$ of an outgoing edge $\rho[y]$ based at $y$. When we speak about rotor-routing or rotor configurations on a (ribbon) graph $G$, we will always mean rotor configurations on the underlying Eulerian directed graph $\underline{G}$ obtained by replacing each undirected edge of $G$ with two oppositely oriented edges.  

Fix a ribbon graph $G$.  The \emph{rotor-routing model} is a deterministic process on pairs $(\rho,x)$ consisting of a rotor configuration $\rho$ and a vertex $x\in V(G)$. We think of $x$ as the position of a \emph{chip} that will move around the graph as the process is iterated. One step of the rotor-routing process consists of replacing $(\rho,x)$ by a new pair $(\sigma,y)$ defined as follows. First, the rotor configuration $\sigma$ coincides with $\rho$ at every vertex except at $x$, where $\sigma[x]$ is the edge leaving $x$ that follows $\rho[x]$ in the cyclic order. Second, the new position of the chip $y$ is the vertex at the other end of the edge $\sigma[x]$ from $x$. In short, the rotor-routing process rotates the rotor $\rho[x]$ at the current position of the chip to the next position, and then moves the chip along this edge in the new direction of the rotor. In this paper we will frequently iterate the rotor-routing process until a certain ``stopping condition'' is reached, and reason about the resulting configuration and the steps taken to reach it.

%The ribbon graph structure on $\underline{G}$ is the one induced from the ribbon graph structure on $G$.

\para{The sandpile group} The group $\Div(G)$ of \emph{divisors} on the graph $G$ is the free abelian group of formal $\Z$-linear combinations $\sum_{v \in V(G)} n_v v$ of the vertices of $G$. In this paper we will be mostly interested in the subgroup $\Div^0(G)$ of ``degree-0'' divisors:
\[\Div^0(G)=\left\{\left.\sum_{v \in V(G)} n_v v~\right|~n_v\in \Z, \, \sum_v n_v=0\right\}.\]
If we fix a vertex $r$ (for ``root''), $\Div^0(G)$ is freely generated by the basis $\{v-r\}_{v\neq r}$ as $v$ ranges over the other vertices of $G$.

The \emph{graph Laplacian} of a graph $G$ is an operator $\Delta\colon \Div(G)\to \Div(G)$ closely related to the adjacency matrix of $G$. With respect to the basis of vertices $v$, the operator $\Delta$ corresponds to the symmetric matrix whose diagonal entries are $\Delta_{vv}$ = degree of $v$ and whose off-diagonal entries $\Delta_{vw} = (-1)\cdot \# \{\text{edges } v \leftrightarrow w\}$. The sum $\Delta_{vv}+\sum_w\Delta_{vw}$ of each row and column is 0, which shows both that the image of $\Delta$ is contained in $\Div^0(G)$, and that $\Delta(\sum_v v)=0$. %In other words, we can think of $\Delta$ as an operator
%\[\Delta\colon \Div^0(G)\to \Div^0(G).\] This operator is injective, and 
The image of $\Delta$ is a finite-index subgroup of $\Div^0(G)$, and the \emph{sandpile group} $\Pic^0(G)$ is the quotient:
\[\Pic^0(G) \coloneq \Div^0(G) / \im(\Delta).\]
For any graph $G$ the group $\Pic^0(G)$ is a finite abelian group whose order is, up to sign, the determinant $\det(\Delta)$ of any $(n-1)\times(n-1)$ submatrix of the graph Laplacian.

People who study sandpile groups usually prefer to work with the distinguished representatives of each divisor class that are recurrent under a certain dynamical process.  We will not describe the beautiful story of sandpile dynamics here, but that viewpoint does implicitly play a key role in the rotor-routing action that we now define.

%, which fits into the short exact sequence
%\[0\to \Div^0(G)\overset{\Delta}{\longrightarrow}\Div^0(G)\to \Pic^0(G)\to 0.\]

\para{Action of $\Pic^0(G)$ on $\T(G)$} Given a choice of root $r \in V(G)$, Holroyd \emph{et al.} \cite{rotor} defined a natural action of $\Div^0(G)$ on $\T(G)$, which we now describe. This action is trivial on any divisor in $\im(\Delta)$ and thus descends to an action of $\Pic^0(G)$ on $\T(G)$.

\begin{figure}%
\centering
\includegraphics[height=2in]{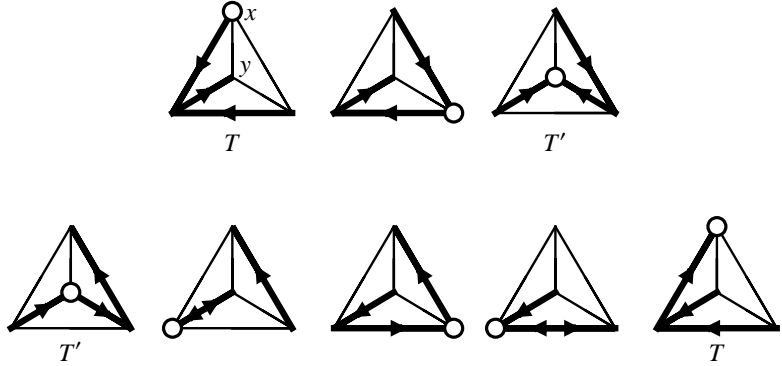}%
\caption{The rotor-routing process in the top row shows that $(x-y)_y(T)=T'$.  The rotor-routing process in the bottom row shows that $(y-x)_x(T')=T$ and thus $(x-y)_x(T)=T'.$  So the action of $x-y$ on $T$ is independent of our choice of $x$ or $y$ as basepoint.  Here, the white circle indicates the position of the chip, and the rotors are all oriented clockwise relative to the page.}%
\label{f:planar}%
\end{figure}

For a fixed root $r$, given a spanning tree $T\in \T(G)$ and a divisor $D\in \Div^0(G)$, we will write $D_r(T)$ for the image of $T$ under the rotor-routing action (based at $r$). 
We define the action of $\Div^0(G)$ by describing the action $(v-r)_r$ of the generators $v - r$ for each  $v\in V(G)$, as follows. Given a spanning tree $T$, orient the edges of $T$ towards $r$. The resulting collection of directed edges defines a rotor configuration on $G$, except that there is no rotor at the root $r$; we think of this as a rotor configuration $\rho$ on the graph obtained from
$\underline{G}$ by removing the edges leaving $r$. Place a chip on the vertex $x$, and iterate the rotor-routing process starting with $(\rho,x)$. Stop when the chip first reaches $r$  (which it necessarily must \cite[Lemma~3.6]{rotor}), and call the resulting state $(\sigma,r)$. The rotor configuration $\sigma$ is in fact another spanning tree $T'$ oriented towards $r$, and we define $(x-r)_r(T)$ to be the spanning tree $T'$. See Figure~\ref{f:planar}. \pagebreak

Since $\Div^0(G)$ is an abelian group, the action of different divisors $D,D'\in \Div^0(G)$ will commute: $D_r D'_r = D'_r D_r$. However, we emphasize that the actions corresponding to \emph{different} root vertices $r$ and $s$ may \emph{not} commute, so it is important to distinguish between $D_r(D'_s(T))$ and $D'_s(D_r(T))$. 

The action of $\Div^0(G)$ descends to the sandpile group $\Pic^0(G)$. In fact, for any choice of root vertex $r$, the resulting action of $\Pic^0(G)$ on $\T(G)$ is free and transitive \cite[Lemmas 3.17 and 3.19]{rotor}. (This property has been vastly generalized to abelian networks by Bond and Levine in their recent preprint \cite{BL}.) 

We can now restate Ellenberg's question \cite{MO} as: if $D_r\colon \T(G)\to \T(G)$ is the permutation defined by this action, does $D_r=D_s$ for all $r,s\in V(G)$ and $D\in\Div^0(G)$?  In other words, is the rotor-routing $\Pic^0(G)$-torsor structure on the set of spanning trees $\T(G)$ independent of basepoint? Theorem~\ref{theorem:main} completely answers this question.
%\margin{JSE asks if this torsor is functorial. MC wonders what are morphisms of (planar) ribbon graphs?}

\section{Reversibility and separating cycles}
\label{sec:reversibility}
A \emph{path} $P$ in the graph $G$ is an edge-injective map $P_k\to G$ from the length-$k$ path $P_k$; it may or may not be vertex-injective. 
%the path is \emph{simple} if the map is also vertex-injective (except that the endpoints may coincide).
A \emph{cycle} $C$ in $G$ is the image of a vertex-injective and edge-injective map $C_k\to {G}$ from the length-$k$ cycle $C_k$ (a circle subdivided into $k$ edges). We consider both paths $P$ and cycles $C$ to be oriented, but we identify maps $C_k\to G$ that differ by cyclic permutation (in other words, we do not specify a basepoint for cycles). We denote by $\ov{P}$ or $\ov{C}$ the same path or cycle with the opposite orientation.

Given an oriented cycle $C$ in a ribbon graph and a vertex $x \in C$, the edges incident to $x$ naturally fall into three classes: the two edges involved in $C$, the edges (if any) on the left of $C$, and the edges (if any) on the right of $C$. The edges to the right of $C$, for example, are the ones that occur after the out-edge at $x$ in $C$ and before the in-edge at $x$ in $C$, in the cyclic order at $x$. These three classes are disjoint, since our graphs have no self-loops; however, if an edge $e$ has both endpoints $x$ and $y$ on $C$, it is possible that $e$ is on the left of $C$ at $x$ but on the right of $C$ at $y$.

A cycle $C$ is \emph{nonseparating} if there exists a path $P$ in $G$ whose endpoints lie on $C$ and is disjoint from $C$ otherwise, with its first edge on the left of $C$ and its last edge on the right of $C$. Such a path $P$ is a \emph{witness} that $C$ is nonseparating. The cycle $C$ is \emph{separating} if no such witness exists; this is equivalent to saying that $C$ is a separating curve on the surface associated to the ribbon graph $G$. The ribbon graph $G$ is \emph{planar} if every cycle is separating (equivalently, if its associated surface is a sphere). 

\para{Unicycles and rotor-routing}
%We begin with some definitions and lemmas.  
%A \emph{unicycle} $(\rho,x)$ consists of a rotor configuration $\rho$ which contains exactly one directed cycle $C(\rho)$, together with a vertex $x$ lying on the cycle $C(\rho)$.  
%Consider a pair $(\rho,x)$ consisting of a rotor configuration $\rho$ and a vertex $x$.  A \emph{rotor-routing step} consists of changing the edge $e=\rho[x]$ to its successor $e'$ in the cyclic order at $x$, and updating $x$ to the vertex $x'$ at the head of $e'$.  
A \emph{unicycle} $(\rho,v)$ consists of a rotor configuration $\rho$ which contains exactly one directed cycle $C(\rho)$, together with a vertex $v$ lying on the cycle $C(\rho)$.  It is not difficult to see that applying rotor-routing to the configuration $(\rho,v)$ preserves these conditions, and thus takes unicycles to unicycles \cite[Lemma 3.4]{rotor}. 
In fact, the following is is a key observation that we will use many times:

\begin{lemma}[{\cite[Lemma 4.9]{rotor}}]
\label{lemma:periodic}
Let $(\rho,v)$ be a unicycle on a graph with $m$ edges.  Iterating the rotor routing process $2m$ times starting at $(\rho,v)$, the chip traverses each edge of $G$ exactly once in each direction, each rotor makes exactly one full turn, and the stopping state is $(\rho, v)$.
\end{lemma}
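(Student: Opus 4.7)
The plan is to prove Lemma~\ref{lemma:periodic} by first establishing that rotor-routing on unicycles is a bijection (so every orbit is periodic) and then pinning down the period as exactly $2m$.

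The first step is to exhibit an inverse to rotor-routing on the finite set of unicycle configurations. Given a unicycle $(\sigma, y)$, I would let $x$ be the predecessor of $y$ on the directed cycle $C(\sigma)$, so that $\sigma[x]$ is the cycle edge $x\to y$; define $\rho$ to agree with $\sigma$ off of $x$, and set $\rho[x]$ to the immediate predecessor of $\sigma[x]$ in the cyclic order at $x$. A direct check using the unicycle structure (the $\sigma$-trajectory starting from the head of $\rho[x]$ eventually enters $C(\sigma)$ and reaches $x$, which in $\rho$ loops back to the head of $\rho[x]$) shows $(\rho,x)$ is itself a unicycle and that one forward step of rotor-routing carries it to $(\sigma,y)$. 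Combined with the already-noted fact that unicycles map to unicycles, rotor-routing is a bijection on the finite set of unicycles. Let $P$ be the length of the orbit of $(\rho,v)$. Over those $P$ steps every rotor returns to its initial position, so the chip must leave each vertex $u$ exactly $k_u\deg(u)$ times for some integer $k_u\geq 0$. Since the chip also returns to $v$, arrivals balance departures at every $u$; decomposing the arrivals at $u$ by incoming edge gives $k_u\deg(u) = \sum_w k_w\cdot\#\{\text{edges }u\text{-}w\}$, because over $k_w$ turns the rotor at $w$ points along each edge $w\to u$ exactly $k_w$ times. Hence $k_u$ is harmonic on the connected graph $G$ and must be constant $k_u = k\geq 1$, so $P = 2mk$.

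The main obstacle is to rule out $k\geq 2$. My plan is to show that no directed edge is traversed twice during the first $2m$ steps; then, writing $n_u$ for the number of departures from $u$, we have $n_u\leq\deg(u)$ for all $u$, and combined with $\sum_u n_u = 2m = \sum_u \deg(u)$ this forces equality $n_u = \deg(u)$ for every $u$. To prove the no-repeat claim, I would suppose some directed edge $u\to w$ is first repeated at times $i<j\leq 2m$ with $j$ minimal; between times $i$ and $j$ the rotor at $u$ completes exactly one full turn. I would then iterate the inverse bijection of the previous paragraph backwards from $(\sigma_j, v_j)$, exploiting that $(\sigma_i, v_i)$ and $(\sigma_j, v_j)$ share both the chip position $w$ and the rotor at $u$, and argue that these local coincidences propagate globally so as to force the two states to coincide — giving the orbit a period $j-i < 2m \leq P$, contradicting $P = 2mk$ with $k\geq 2$. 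This is the technical heart of the argument: the harmonic analysis of the previous paragraph constrains $P$ only up to the factor $k$, and pinning $k=1$ seems to require genuine use of the rotor dynamics rather than just balance equations. Once no edge repeats in the first $2m$ steps, $n_u = \deg(u)$ for every $u$, so each rotor completes exactly one full turn, every directed edge is traversed exactly once, and the chip returns to $v$, yielding all three assertions of the lemma simultaneously.
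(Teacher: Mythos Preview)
First, note that the paper does not give its own proof of this lemma: it is quoted from \cite[Lemma~4.9]{rotor} and used as a black box throughout. So there is no argument in the present paper to compare against, and your attempt must be judged on its own.

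Your first paragraph is correct. The construction of the inverse map works, so rotor-routing is a bijection on the finite set of unicycles and hence periodic; and the harmonic argument correctly shows that the period has the form $P = 2mk$ for some integer $k \ge 1$.

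The second paragraph, however, has a genuine gap at exactly the point you flag as ``the technical heart.'' You propose that if the directed edge $u \to w$ is first repeated at steps $i < j \le 2m$, then the states after steps $i$ and $j$ share the chip position $w$ and the rotor value at $u$, and that iterating the inverse bijection will make these local coincidences ``propagate globally'' until the two full states are forced to coincide. But they do not propagate. One application of the inverse does take both states to configurations with chip at $u$ and the same rotor at $u$. A second application, however, requires locating the predecessor of $u$ on the cycles $C(\sigma_{i-1})$ and $C(\sigma_{j-1})$, and those two cycles need not agree beyond their shared first edge out of $u$. Concretely, during the interval $(i,j]$ the chip may visit some vertex $x \neq u$ a number of times strictly between $0$ and $\deg(x)$ --- nothing you have established rules this out --- and then $\sigma_i[x] \neq \sigma_j[x]$, so the two functional digraphs (and hence their cycles, and hence the predecessors of $u$ on them) genuinely differ. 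The backward iterations can therefore diverge immediately after the first step, and no contradiction is reached. Pinning down $k=1$ requires a different idea; see \cite{rotor} for one route.
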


We write $(\rho,x)\corr (\sigma,y)$ if the configuration $(\sigma,y)$ can be obtained from $(\rho,x)$ by iterating the rotor-routing process some positive number of times. By %\cite[Lemma 3.6]{rotor}, 
Lemma~\ref{lemma:periodic},
%the rotor-routing process is periodic when applied to unicycles, so 
the relation $(\rho,x)\corr (\sigma,y)$ is in fact an equivalence relation on unicycles.  
The following lemma shows that each equivalence class is naturally in bijection with the directed edges of $G$.
We write $(\rho,x)\leadsto_{y,e}(\sigma,y)$ if $(\rho,x)\corr(\sigma,y)$ and $\sigma[y]=e$.
\begin{lemma}
\label{lemma:therotorknows}
Let $(\rho,x)$ be a unicycle. For any vertex $y$ and any directed edge $e$ based at $y$, there exists a unique rotor configuration $\sigma$ such that $(\rho,x)\leadsto_{y,e} (\sigma,y)$.
\end{lemma}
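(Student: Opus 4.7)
The plan is to deduce both existence and uniqueness directly from Lemma~\ref{lemma:periodic}. That lemma tells us that iterating rotor-routing $2m$ times starting from $(\rho, x)$ returns to $(\rho, x)$ and traverses each of the $2m$ directed edges of $\uG$ exactly once. Listing the intermediate configurations as $(\rho_0, x_0) = (\rho, x), (\rho_1, x_1), \ldots, (\rho_{2m-1}, x_{2m-1})$, the equivalence class of $(\rho,x)$ under $\corr$ is contained in $\{(\rho_j, x_j) : 0 \leq j < 2m\}$. So it suffices to show that for every vertex $y$ and every directed edge $e$ based at $y$, exactly one of these $2m$ configurations $(\rho_j, x_j)$ satisfies $x_j = y$ and $\rho_j[y] = e$.

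To carry this out, I would first count visits of the chip to $y$: since each of the $\deg(y)$ directed edges out of $y$ is traversed exactly once during the period, there are exactly $\deg(y)$ indices $j$ with $x_j = y$, say $i_1 < i_2 < \cdots < i_{\deg(y)}$. Between consecutive visits $i_k$ and $i_{k+1}$ the chip is not at $y$, so the rotor at $y$ is untouched except by the single rotation performed at step $i_k$ itself. Consequently $\rho_{i_{k+1}}[y]$ is the edge immediately following $\rho_{i_k}[y]$ in the cyclic order at $y$. Cycling around, the sequence $\rho_{i_1}[y], \ldots, \rho_{i_{\deg(y)}}[y]$ lists the $\deg(y)$ outgoing edges of $y$ in cyclic order, each appearing exactly once.

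This immediately yields both existence and uniqueness of $\sigma$ in the lemma: there is a unique $k$ with $\rho_{i_k}[y] = e$, and $\sigma := \rho_{i_k}$ is then the unique rotor configuration satisfying $(\rho, x) \leadsto_{y, e} (\sigma, y)$. The only subtlety I anticipate is purely notational: one must take care to track whether $\rho_j[y]$ refers to the rotor at $y$ before or after the step taken at time $j$, and to verify that the rotation is applied before the chip moves, so that $\rho_{i_k}[y]$ genuinely records the rotor at $y$ observed in the unicycle $(\rho_{i_k}, y)$. Beyond this bookkeeping, Lemma~\ref{lemma:periodic} supplies all the combinatorial content, and I do not foresee any more substantive obstacle.
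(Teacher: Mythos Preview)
Your proposal is correct and follows essentially the same approach as the paper's proof: both rely on Lemma~\ref{lemma:periodic} to conclude that the rotor at $y$ makes exactly one full turn during a period, and since it advances only when the chip is at $y$, there is exactly one visit at which the rotor points along $e$. Your version spells out the bookkeeping (indexing the $\deg(y)$ visits and tracking the cyclic advance of the rotor between them) that the paper compresses into two sentences, but the underlying argument is the same.
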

\begin{proof}
Run the rotor-routing process starting at $(\rho,x)$ until the next occurrence of $(\rho,x)$.  According to %\cite[Lemma 4.9]{rotor}, 
Lemma~\ref{lemma:periodic}, each rotor makes precisely one full rotation in this process. Since the rotor at $y$ only advances when the chip is at $y$, this implies that at some intermediate stage $(\sigma,y)$ we had $\sigma[y]=e$. The next rotor-routing step then advances the rotor at $y$, so $\sigma$ is unique.
%This proof could definitely be improved. -TC
\end{proof}
The \emph{first} time the chip arrives at $y$ when rotor-routing is iterated starting with $(\rho,x)$, the current rotor configuration $\sigma$ satisfies $\sigma[y]=\rho[y]$, since $y$ has not previously been visited. Accordingly we  write $(\rho,x)\leadsto_y (\sigma,y)$ as an abbreviation for $(\rho,x)\leadsto_{y,\rho[y]}(\sigma,y)$. Finally, when $(\sigma,y)$ has already been defined, we write $(\rho,x)\leadsto (\sigma,y)$ as shorthand for the rotor routing process beginning with $(\rho,x)$ and ending at the first occurrence of $(\sigma, y)$.

\begin{definition} \label{def:reversible}
Given a unicycle $(\rho,v)$ with directed cycle $C=C(\rho)$, let $\ov{\rho}$ denote the configuration obtained from $\rho$ by reversing the edges of $C$, and keeping all other rotors unchanged.  We say that $C$ is \emph{reversible} if $(\rho, v) \corr (\ov{\rho}, v)$.
\end{definition}
\noindent Before using this terminology, we need to verify that it is well-defined: in other words, that reversibility really is a property of the \emph{cycle} $C$, and does not depend on the choice of $\rho$ nor the choice of $v$. Note that by definition, $C$ is reversible if and only if $\ov{C}$ is reversible. 

We will prove in Proposition~\ref{prop:reversibility2} that reversibility is well-defined, but we first need the following technical result. This result will also be used, along with Lemma~\ref{lemma:almostseparating}, to show that reversible cycles are quite close to being separating.
\begin{proposition}
\label{prop:LCRC}
Let $C$ be a directed cycle. If there exists a unicycle  $(\rho,v)$  with $C=C(\rho)$ such that $(\rho,v)\corr (\ov{\rho},v)$, then the vertices $y\not\in C$ can be partitioned into two sets $L_C\sqcup R_C$ such that:
\begin{enumerate}
\item if $y\not\in C$ is adjacent to $x\in C$ along an edge lying on the left of $C$ at $x$, then $y\in L_C$.
\item if $y\not\in C$ is adjacent to $x\in C$ along an edge lying on the right of $C$ at $x$, then $y\in R_C$.
\item if $y\not\in C$ is adjacent to $z\not\in C$, then  $y$ and $z$ are either both in $L_C$ or both in $R_C$.\qedhere
\end{enumerate}
\end{proposition}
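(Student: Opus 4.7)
The plan is to exploit the periodic orbit $(\rho, v) \corr (\ov{\rho}, v) \corr (\rho, v)$ guaranteed by Lemma~\ref{lemma:periodic}. The hypothesis $(\rho, v) \corr (\ov{\rho}, v)$ says that $(\ov{\rho}, v)$ appears within the orbit, splitting the full period of $2m$ steps into two segments $S_1$ (from $(\rho, v)$ to $(\ov{\rho}, v)$) and $S_2$ (from $(\ov{\rho}, v)$ back to $(\rho, v)$). The key observation is that for any $y \notin C$, the configurations $\rho$ and $\ov{\rho}$ agree at $y$ (by the definition of $\ov{\rho}$), so in each segment the rotor at $y$ starts and ends pointing at the same edge, and thus completes an integer number of full turns. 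Since these two nonnegative integers sum to $1$ by Lemma~\ref{lemma:periodic}, exactly one is $1$ and the other is $0$; equivalently, the chip visits $y$ during exactly one of $S_1$ and $S_2$. I take this to define the partition: let $R_C$ (resp.~$L_C$) consist of those $y \notin C$ visited during $S_1$ (resp.~$S_2$).

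The rest is bookkeeping. For conditions (1) and (2), I analyze how the rotor at a cycle vertex $x$ advances in each segment. In $S_1$ it rotates from $\rho[x]$ (the out-edge of $C$ at $x$) to $\ov{\rho}[x]$ (the reverse of the in-edge of $C$ at $x$), which by the very definition of ``right of $C$ at $x$'' sweeps exactly through the right-of-$C$ edges at $x$ before stopping at the reversed in-edge. So during $S_1$ the chip leaves $x$ precisely along those right edges (and one final step along $\ov{C}$), while in $S_2$ the chip leaves $x$ exactly along the left-of-$C$ edges at $x$ (and one final step along $C$). Consequently, any off-cycle neighbor $y$ of $x$ reached via a right edge is visited during $S_1$, so $y \in R_C$, and symmetrically for left edges and $L_C$. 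For condition (3), suppose $y, z \notin C$ are joined by an edge $f$. By Lemma~\ref{lemma:periodic} the chip traverses $f$ from $y$ to $z$ exactly once in the full period, in some segment $S_i$. This traversal is initiated by a visit to $y$ in $S_i$ and leaves the chip at $z$; the chip then initiates a further step still within $S_i$, since $S_i$ ends at $(\rho,v)$ or $(\ov\rho,v)$ with the chip at $v \in C$, and $z \notin C$. Thus $z$ is also visited during $S_i$, so $y$ and $z$ lie in the same part of the partition.

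I do not anticipate a serious obstacle: the central trick is the integer-sum argument using $\rho[y] = \ov{\rho}[y]$ for $y \notin C$ together with Lemma~\ref{lemma:periodic}. The only delicate point is fixing which segment corresponds to ``left'' versus ``right'', which is settled by inspecting the cyclic order at a single vertex $x \in C$ as above.
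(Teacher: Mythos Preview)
Your proof is correct and follows essentially the same approach as the paper: define the partition by whether a vertex $y\notin C$ is visited during the rotor-routing segment $(\rho,v)\leadsto(\ov\rho,v)$, using that $\rho[y]=\ov\rho[y]$ forces the rotor at $y$ to make either zero or one full turn there, and then read off conditions (1)--(3) from the sweep of the rotor at each $x\in C$ through the right-of-$C$ edges. The only cosmetic difference is that you frame the argument symmetrically via both segments $S_1$ and $S_2$, whereas the paper works entirely inside $S_1$ and characterizes $L_C$ as ``visited $0$ times'' rather than ``visited in $S_2$''; these are equivalent by your integer-sum observation.
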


Any separating curve $C$ separates the surface into two parts $L_C$ and $R_C$ as in this proposition. The key utility of Proposition~\ref{prop:LCRC} is that it allows us to define the sets $L_C$ and $R_C$ knowing only that $C$ is \emph{reversible}, without knowing whether or not $C$ is separating. 

\begin{proof}
For any vertex $y$ in $G$, let $d_y$ be its degree. Consider the rotor-routing process that takes $(\rho,v)$ to $(\ov{\rho},v)$.
For any vertex $y$ \emph{not} lying on $C$, the rotor ends at its initial position $\rho[y]=\ov{\rho}[y]$, so by Lemma~\ref{lemma:periodic}, the vertex $y$ was visited either 0 or $d_y$ times. We define $L_C=\{y\not\in C|\text{$y$ is visited 0 times}\}$ and $R_C=\{y\not\in C|\text{$y$ is visited $d_y$ times}\}$. Since $G$ has no isolated vertices (by our convention, all graphs are connected), these two sets are disjoint. Since no directed edge is traversed more than once, in the latter case $y\in R_C$ the chip must arrive at $y$ along each of the $d_y$ adjacent edges exactly once and leave $y$ along each edge exactly once.

Now consider a vertex $x$ lying on $C$. Since the cycle $C$ has been reversed, the rotor at $x$ begins at $\rho[x]=C[x]$ and ends at $\ov{\rho}[x]=\ov{C}[x]$. Therefore the rotor rotates $d_x^R$ times, where $d^R_x$ is 1 plus the number of edges at $x$ on the \emph{right} of $C$, and so the chip visits $x$ precisely $d_x^R$ times. The first $d_x^R-1$ times the chip leaves $x$, it leaves along the $d_x^R-1$ edges lying on the right of $C$; the final time it leaves $x$, it leaves along the edge $\ov{\rho}[x]=\ov{C}[x]$ of the reversed cycle $C$ (since this is the final state of the rotor).

If $y\not\in C$ is adjacent to $x\in C$ along an edge lying on the right of $C$ at $x$, the chip will leave $x$ along this edge at some point, so $y\in R_C$. Similarly, if $y\not\in C$ is adjacent to $x\in C$ along an edge lying on the left of $C$ at $x$, the chip does not leave $x$ along this edge; in other words, the chip does not arrive at $y$ along this edge. This implies that $y\in L_C$, because we noted above a vertex $y$ lying in $R_C$ must be visited along every one of its incoming edges, and this would include the edge from $x$. Finally, if $y\not\in C$ is adjacent to $z\not\in C$ and the chip traverses this edge in either direction, then both $y$ and $z$ lie in $R_C$; if not, both lie in $L_C$.
\end{proof}

The three conditions above uniquely characterize $L_C$ and $R_C$ as those vertices that can be connected by a path to the left side or right side of $C$ respectively; since this description does not depend on $\rho$ or $v$, the sets $L_C$ and $R_C$ depend only on the cycle $C$. This seems at first to prove that $C$ is separating. However, it does not rule out the possibility of a witness consisting of a single edge  joining the left side of $x\in C$ to the right side of $x'\in C$. In fact, nonseparating cycles of this form can indeed be reversible, as we will see in the proof of Proposition~\ref{prop:planar_is_reversible}.
% as in Figure [a figure that is not yet drawn]?

\begin{proposition}\label{prop:reversibility2}
Let $C$ be a directed cycle. If $(\rho',v')\corr (\ov{\rho}',v')$ for \emph{some} unicycle $(\rho',v')$ with $C=C(\rho')$, then 
$(\rho, v)\corr (\ov{\rho}, v)$ for \emph{any} unicycle $(\rho, v)$ with $C=C(\rho)$.
In other words, the reversibility of a cycle $C$, as in Definition~\ref{def:reversible}, is well-defined: it only depends on $C$, and not on a choice of unicycle.
\end{proposition}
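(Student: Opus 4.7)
The plan is to apply Proposition~\ref{prop:LCRC} to the hypothesized unicycle $(\rho',v')$ to obtain the partition $V \setminus C = L_C \sqcup R_C$, which as remarked after that proposition depends only on $C$. Given any other unicycle $(\rho, v)$ with $C(\rho) = C$, Lemma~\ref{lemma:therotorknows} yields a unique $\sigma$ with $(\rho, v) \leadsto_{v, \ov{C}[v]} (\sigma, v)$, so it suffices to show $\sigma = \ov{\rho}$. Write $r_x$ for the number of right-edges of $C$ at $x \in C$, and $o_x, i_x$ for the forward- and reverse-$C$ edges out of $x$, so that $\rho[x] = o_x$ and $\ov{\rho}[x] = i_x$ for all $x \in C$.

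The central construction is an auxiliary balanced directed multigraph $H$ on the vertex set $C \cup R_C$ whose directed edges are: all directed edges within $R_C$, all edges between $R_C$ and $C$ (which by condition~(3) of Proposition~\ref{prop:LCRC} are right-edges of $C$, in both directions), and, for each $x \in C$, the single reverse-$C$ edge $i_x$. A direct count shows $H$ is balanced with in-degree $=$ out-degree $= r_x+1$ at $x \in C$ and $= \deg(y)$ at $y \in R_C$, and it is connected; the ribbon structure of $G$ induces a cyclic order on the $H$-out-edges at each vertex. Define $\rho_H$ by $\rho_H[y] = \rho[y]$ for $y \in R_C$ and $\rho_H[x] = i_x$ for $x \in C$. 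The resulting $\rho_H$ is a unicycle in $H$ with cycle $\ov{C}$: since $\rho$'s only cycle is $C$, the rotor-paths from $R_C$-vertices in $\rho$ terminate at $C$, and in $\rho_H$ they continue along $\ov{C}$.

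By the direct generalization of Lemma~\ref{lemma:periodic} to rotor-routing on balanced directed graphs with cyclic orders, $H$-rotor-routing from $(\rho_H, v)$ is periodic of period $|E(H)|$, returning to $(\rho_H, v)$ while traversing each $H$-edge exactly once; in particular each $C$-rotor advances exactly $r_x+1$ times and each $R_C$-rotor advances exactly $\deg(y)$ times. I would then verify that the $G$-rotor-routing from $(\rho, v)$ traces the same chip trajectory as the $H$-rotor-routing from $(\rho_H, v)$ throughout these $|E(H)|$ steps. At $R_C$-vertices the two cyclic orders coincide; at $x \in C$ the first advance takes both $o_x$ (in $G$) and $i_x$ (in $H$) to the first right-edge, and subsequent advances through $r_1, \ldots, r_{r_x}, i_x$ agree provided no $C$-rotor advances past $i_x$ --- which is automatic because the $H$-process uses each $C$-rotor only $r_x+1$ times. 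After $|E(H)|$ steps of $G$-rotor-routing the chip is at $v$, each $C$-rotor sits at $\ov{C}[x] = \ov{\rho}[x]$, each $R_C$-rotor has been restored by a full rotation to $\rho[y] = \ov{\rho}[y]$, and each $L_C$-rotor is untouched; so the state is exactly $(\ov{\rho}, v)$, and by no-re-use of $H$-edges this is the first occurrence of the stopping condition.

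The main obstacle is justifying that the $G$- and $H$-rotor-routings really do agree throughout --- equivalently, the invariant that the chip never enters $L_C$ and no $C$-rotor advances past $i_x$ before the stopping time. I would handle this by induction on time: if the chip first arrived at some $x \in C \setminus \{v\}$ for the $(r_x+2)$-nd time, then the $r_x+2$ distinct in-edges used at $x$ must include one outside $H$, since $H$ supplies only $r_x+1$ in-edges at $x$. A non-$H$ in-edge at $x$ either comes from an $L_C$-vertex (ruled out inductively, using condition~(3) of Proposition~\ref{prop:LCRC} to preclude $R_C$-$L_C$ adjacency) or from the previous $C$-vertex via its forward $C$-edge $o$ (requiring that vertex's rotor to have already completed a full rotation through $i$, giving an even earlier violation). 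Since Lemma~\ref{lemma:periodic} applied to $G$ forbids re-use of any directed edge in the first $2m$ steps, this closes the induction and establishes the invariant, completing the proof that $(\rho, v) \corr (\ov{\rho}, v)$.
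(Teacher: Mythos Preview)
Your approach is genuinely different from the paper's and conceptually appealing: you package ``the right side of $C$'' as an auxiliary Eulerian digraph $H$ and invoke periodicity there, whereas the paper works directly with the edge set $E_C$ traversed in the hypothesized process $(\rho',v')\leadsto(\ov{\rho}',v')$, defines a \emph{maximal reversal}, and uses a counting/propagation argument (plus a second reversal to handle $R_C$-rotors). When it works, your packaging is cleaner; the paper's argument is more hands-on but handles all cases uniformly.

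However, there is a real gap: your construction of $H$ omits \emph{chords} of $C$ (edges with both endpoints on $C$ that are not edges of $C$). A right-edge at $x\in C$ need not go to $R_C$; it may be a chord to some other $x'\in C$, and such a chord can even be on the \emph{left} of $C$ at $x'$. With chords present, your claimed out-degree $r_x+1$ at $x$ is wrong for your $H$ (you only get the right-edges that land in $R_C$, plus $i_x$), the $G$- and $H$-trajectories diverge the first time the $G$-rotor at some $x$ hits a right-chord, and your inductive invariant in the last paragraph does not rule out the chip arriving at $x$ along a chord from another $C$-vertex. (Incidentally, the fact that edges between $R_C$ and $C$ are right-edges follows from conditions~(1)--(2) of Proposition~\ref{prop:LCRC}, not~(3).)

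The natural fix is to take $H$ to have directed edge set exactly $E_C$: all directed edges leaving $R_C$, all right-edges leaving each $x\in C$ (chords included), and each $i_x$. Then the induced cyclic order at $x$ is $r_1,\dots,r_{r_x},i_x$, and your matching argument goes through verbatim. But now balance of $H$ is \emph{not} a direct count: at $x\in C$ it amounts to the identity (\#\,chords at $x$ right at $x$) $=$ (\#\,chords at $x$ right at the other end), which is not automatic combinatorially. It follows only from the hypothesis, via the observation (as in the paper's proof) that in $(\rho',v')\leadsto(\ov{\rho}',v')$ the chip traverses exactly $E_C$ and returns to its start, forcing in-degree $=$ out-degree in $E_C$ at every vertex. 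Once you add this step, your argument is complete; without it, the proof does not cover graphs in which $C$ has chords.
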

\begin{proof}
Given a unicycle $(\rho,v)$ with $C=C(\rho)$, define its \emph{maximal reversal} $(\sigma,u)$ to be the first state $(\sigma,u)$ encountered in the rotor-routing process such that $u\in C$ and $\sigma[u]=\ov{C}[u]$. In other words, the chip is on the cycle $C$ at the vertex $u$, and the rotor at $u$ is about to rotate \emph{past} the reversed cycle $\ov{C}$ to the edges on the left of $C$ for the first time. We will prove that as long as $(\rho',v')\corr (\ov{\rho}',v')$ for \emph{some} $\rho'$ and some $v'\in C$,  the maximal reversal of $(\rho,v)$ is  $(\ov{\rho},v)$.  Then we will have $(\rho,v)\corr (\sigma,u)\! =\! (\ov{\rho},v)$ as desired. (We cannot use Lemma~\ref{lemma:therotorknows} to deduce this from $\sigma[u]=\ov{C}[u]=\ov{\rho}[u]$, because we do not yet know that $(\rho,u)\corr (\ov{\rho},u)$.) % $u=v'$ and that $\sigma=\ov{\rho}$.

Assume that $(\rho',v')\corr(\ov{\rho}',v')$ for some $\rho'$ with $C=C(\rho')$, and let $L_C$ and $R_C$ be the sets defined by Proposition~\ref{prop:LCRC}.
The set of directed edges traversed in the process $(\rho',v')\leadsto (\ov{\rho}',v')$ was described in the proof of that proposition: the chip traverses (1) every directed edge leaving $y\in R_C$, (2) each directed edge leaving $x\in C$ on the right side of $C$, and (3) each edge $\ov{C}[x]$ leaving $x\in C$. This set of directed edges only depends on $C$, not on $\rho'$ or $v'$, so we will denote it by $E_C$.

For any vertex $w$, let $e_w$ denote the number of edges in $E_C$ leaving $w$. (Specifically,  $e_x=d_x^R$ for $x\in C$, $e_y=d_y$ for $y\in R_C$, and $e_y=0$ for $y\in L_C$.) Since the chip ends up back at $v$, it leaves each vertex $w$ the same number of times that it arrives there, so $e_w$ is also the number of directed edges in $E_C$ \emph{arriving} at $w$.

Now consider an arbitrary rotor configuration $(\rho,v)$ with $C=C(\rho)$ and $v\in C$. The key to this lemma is the observation that if $(\sigma,u)$ is the maximal reversal of $(\rho,v)$, the process $(\rho,v)\leadsto (\sigma,u)$ only traverses edges in $E_C$. Assume otherwise, and let $e$ be the first directed edge \emph{not} in $E_C$ traversed. Since the chip begins on $C$, and no edges connect $R_C$ to $L_C$, this first edge $e$ is based at some $x\in C$. By definition of $E_C$, either $e$ lies on the left side of $C$, or $e=C[x]$ itself. In either case, the rotor at $x$ must rotate past $\ov{C}[x]$ before it can reach the edge $e$. But this is a contradiction, since  by definition $(\sigma,u)$ is the first time the rotor at $x\in C$ is ${\overline C}[x]$ while the chip is there.

In the process $(\rho, v)\leadsto (\sigma, u)$, the rotor at $u\in C$ rotates from $\rho[u]=C[u]$ to $\sigma[u]=\ov{C}[u]$, so the chip leaves $u$ precisely $e_u$ times. However, if $u\neq v$, the chip must have arrived at $u$ precisely $e_u+1$ times, since it did not originate there. Since the total number of edges in $E_C$ arriving at $u$ is only $e_u$, this is a contradiction. This shows that $u=v$, and moreover that the chip has arrived at $v$ along each of the $e_{v}$ edges in $E_C$ directed towards $v$.

One of these is the edge $\ov{C}[w]$, where $w\in C$ is the next vertex after $v$ in the cycle $C$, so we must have $\sigma[w]=\ov{C}[w]$ as well. But if the rotor at $w$ has reversed from $\rho[w]=C[w]$ to $\sigma[w]=\ov{C}[w]$, the chip must have left $e_w$ times. To arrive $e_w$ times at $w$, the rotor at the \emph{next} vertex after $w$ must have been reversed, and so on. By induction, we conclude that  we have $\sigma[x]=\ov{C}[x]=\ov{\rho}[x]$ for all $x\in C$. In particular, the directed cycle of $\sigma$ is $C(\sigma)=\ov{C}$.

Since the process $(\rho,v)\leadsto (\sigma,v)$ does not visit $L_C$, we know that $\sigma[y]=\rho[y]=\ov{\rho}[y]$ for all $y\in L_C$. It remains to show that $\sigma[y]=\rho[y]$ for $y\in R_C$. To do this, let $(\tau,v)$ be the maximal reversal of $(\sigma,v)$.
Since $C(\sigma)=\ov{C}$, the process $(\sigma,v)\leadsto (\tau,v)$ does not visit $L_{\ov{C}}=R_C$, so $\tau[y]=\sigma[y]$ for all $y\in R_C$. But the previous paragraph shows that $\tau[v]=\overline{\sigma}[v]=\rho[v]$. Since $(\rho,v)\corr (\sigma,v)\corr (\tau,v)$, Lemma~\ref{lemma:therotorknows} implies that $\tau=\rho$. Therefore $\sigma[y]=\rho[y]$ for all $y\not\in C$, and $\sigma[x]=\ov{\rho}[x]$ for all $x\in C$, demonstrating that $\sigma=\ov{\rho}$ as desired.
\end{proof}

\begin{lemma}
\label{lemma:almostseparating}
Let $C$ be a directed cycle. If $C$ is reversible, then any path $P$ witnessing that $C$ is nonseparating must have length 1. Conversely, if $C$ is a separating cycle, then $C$ is reversible.
\end{lemma}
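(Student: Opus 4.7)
The plan is to prove the two implications of the lemma separately. The forward direction follows almost immediately from Proposition~\ref{prop:LCRC}, while for the converse the strategy is to reuse the maximal-reversal machinery of Proposition~\ref{prop:reversibility2} essentially verbatim. The only ingredient of that proof which actually required reversibility was the in-out degree balance of the edge set $E_C$, and I would replace that step by a direct combinatorial verification using the separating hypothesis.

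For the forward implication, suppose $C$ is reversible, so Proposition~\ref{prop:LCRC} supplies a partition $V(G)\setminus V(C)=L_C\sqcup R_C$ satisfying its three conditions. Given a witness path $P=x_0 x_1\cdots x_k$, condition~(1) places $x_1$ in $L_C$ and condition~(2) places $x_{k-1}$ in $R_C$. If $k=2$ then $x_1=x_{k-1}$ would lie in both $L_C$ and $R_C$, a contradiction; if $k\geq 3$ then condition~(3) applied inductively along the interior edges $x_i x_{i+1}$ (whose endpoints all avoid $V(C)$) forces $x_1,\ldots,x_{k-1}$ all into the same set as $x_1$, namely $L_C$, contradicting $x_{k-1}\in R_C$. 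Hence $k=1$.

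For the converse, suppose $C$ is separating. Let $L_C$ and $R_C$ be the vertex sets of the two sides into which $C$ cuts the surface of $G$. Since no witness path exists, there are no edges from $L_C$ to $R_C$ (one such edge, together with connectivity on each side, would extend to a witness), and every chord must lie on the same side of $C$ at both endpoints (otherwise the chord itself would be a length-one witness). These observations give conditions~(1)--(3) of Proposition~\ref{prop:LCRC}. I would next define $E_C$ exactly as in the proof of Proposition~\ref{prop:reversibility2}---every out-edge at every $y\in R_C$, every right-side out-edge at every $x\in V(C)$, and the edges $\ov{C}[x]$---and verify the in-out balance of $E_C$ at each vertex. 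At $y\in R_C$, both degrees equal $d_y$, since every incident edge meets $R_C\cup V(C)$ and in either case the appropriately directed edge lies in $E_C$. At $x\in V(C)$, writing $a$ for the number of edges from $x$ into $R_C$ and $b$ for the number of right chords at $x$, the out-degree is $(a+b)+1=d_x^R$, and the in-degree is $a$ (from $R_C$) $+\,1$ (the edge $\ov{C}$ from the next $C$-vertex) $+\,b$ (right chords, which land at $x$ because they are right at both endpoints by the separating hypothesis), again $d_x^R$. The same verification gives in-out balance for $E_{\ov{C}}$, with the roles of $L_C,R_C$ swapped.

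With these balance conditions established, the proof of Proposition~\ref{prop:reversibility2} transfers verbatim. Given an arbitrary unicycle $(\rho,v_0)$ with $C(\rho)=C$ and $v_0\in V(C)$, I would define its maximal reversal $(\sigma,u)$; it exists because Lemma~\ref{lemma:periodic} guarantees the rotor at $v_0$ eventually reaches $\ov{C}[v_0]$ while the chip is at $v_0$. The same first-violating-edge argument shows $(\rho,v_0)\leadsto(\sigma,u)$ uses only edges of $E_C$; the same in-out counting forces $u=v_0$ and then, propagating around $C$, forces $\sigma[x]=\ov{C}[x]$ for every $x\in V(C)$; and the same second-reversal argument applied to $(\sigma,v_0)$, using $L_{\ov C}=R_C$ and the in-out balance of $E_{\ov{C}}$ together with Lemma~\ref{lemma:therotorknows}, yields $\sigma[y]=\rho[y]$ for every $y\in R_C$. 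The rotors on $L_C$ are unchanged because the process never enters $L_C$. Thus $\sigma=\ov{\rho}$, proving reversibility. I expect the main obstacle to be precisely the in-out balance verification at $x\in V(C)$: it demands the careful chord accounting above, which genuinely relies on separating-ness to prevent any chord from being left at one endpoint and right at the other.
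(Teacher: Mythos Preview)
Your forward direction matches the paper's exactly. Your converse is correct but takes a genuinely different route. You recycle the maximal-reversal machinery of Proposition~\ref{prop:reversibility2}, supplying the one missing ingredient (in-out balance of $E_C$ and $E_{\ov C}$) by a direct combinatorial count that crucially uses separating-ness to force every chord to lie on the same side of $C$ at both endpoints. That count is correct, and once it is in place the rest of Proposition~\ref{prop:reversibility2}'s argument does transfer verbatim, including the second-reversal step.

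The paper instead argues the converse by a subgraph-and-periodicity trick: delete $R'_C$ and all right-side edges to form a ribbon subgraph $H$; in $H$ the two $C$-edges at each $x\in C$ become adjacent in the cyclic order, so rotor-routing on $(\rho|_H,v)$ reverses $C$ in $|C|$ steps to reach $(\ov\rho|_H,v)$. Periodicity (Lemma~\ref{lemma:periodic}) then gives a process $(\ov\rho|_H,v)\leadsto(\rho|_H,v)$ in $H$, and since this segment never touches a deleted edge it runs identically on $G$, proving $(\ov\rho,v)\corr(\rho,v)$. This is shorter and avoids repeating the maximal-reversal analysis; your approach, by contrast, buys a uniform picture in which both Proposition~\ref{prop:reversibility2} and this lemma flow from the same $E_C$-balance mechanism, at the cost of the extra bookkeeping you flagged at the end.
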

\begin{proof}
Let $P$ be a path of length $k\geq 2$, with vertices $p_0,p_1,\ldots,p_k$, with endpoints $p_0\in C$ and $p_k\in C$. If $P$ is a witness that $C$ is nonseparating,  the second vertex $p_1$  lies in $L_C$, and the second-to-last vertex $p_{k-1}$  lies in $R_C$. But this contradicts condition (3) of Proposition~\ref{prop:LCRC}, which says that membership in $L_C$ is locally constant along paths in $G-C$.  Therefore any witness $P$ must have length 1.

To prove that any separating cycle is reversible, we will go through the proof of Proposition~\ref{prop:LCRC} in reverse; the resulting argument closely parallels \cite[Corollary 4.11]{rotor}, where this was proved for planar ribbon graphs.  (In fact, a special case of reversibility, for cycles in an $n\times n$ planar grid graph, was proved even earlier, in \cite[Proposition I]{pps}.)

If $C$ is a separating cycle, we can partition the vertices $y\not\in C$ into $L'_C\sqcup R'_C$, where $y\in L'_C$ (resp.\ $y\in R'_C$) if there exists a path from $y$ to $C$ ending with an edge on the left side (resp.\ right side) of $C$. Every $y$ can be connected to $C$ by some path,  since $G$ is connected, so $y\in L'_C\cup R'_C$. If $y\in L'_C\cap R'_C$ could be connected to both sides of $C$, splicing these paths together would give a witness that $C$ was nonseparating, so $L'_C\cap R'_C=\emptyset$.

Let $H$ be the ribbon graph obtained from $G$ by deleting all vertices in $R'_C$, all edges adjacent to $R'_C$, and all edges lying on the right of $C$. Because $C$ is separating, no edge on the left side of $C$ is removed (every such edge either joins $x\in C$ to $y\in L'_C$, or joins $x\in C$ to $x'\in C$, in which case it lies on the left side of $C$ at both endpoints).
Since no rotor in $\rho|_{C \cup L'_C}$ points along a removed edge, we can restrict the rotor configuration $\rho$ to  $H$ as $\rho|_H$.

It is easy to run the rotor routing process  starting at $(\rho|_H, v)$, because at each $x\in C$ the two edges of $C$ are now adjacent in the cyclic order. Thus at each step, the rotor at one vertex on $C$ is reversed, and the chip moves to the previous vertex on $C$. After a number of steps equal to the length of $C$,  each rotor on $C$ has moved once, the cycle $C$ has been reversed, and the chip is back at $v$; in other words, the process takes $(\rho|_H,v) \leadsto (\ov{\rho}|_H, v)$.

Since  rotor-routing  on $H$ is periodic, continuing  gives a rotor-routing process on $H$ taking $(\ov{\rho}|_H, v)$ to $(\rho|_H, v)$. But this segment of the process does not involve any of the deleted edges. Therefore if the rotor-routing process is run in parallel on $G$ starting with $(\ov{\rho},v)$, the two processes will take precisely the same series of steps. When the former terminates at $(\rho|_H,v)$, the latter will therefore be at $(\rho, v)$, demonstrating that $(\ov{\rho},v)\corr (\rho,v)$ as desired.
\end{proof}

%\begin{lemma}\label{lemma:flipitandreverseit}
%Given a configuration $\rho$ with cycle $C(\rho)$, let $\overline{\rho}$ be the configuration obtained from $\rho$ by reversing every edge in $C(\rho)$ and leaving all other rotors unchanged. Then $(\rho,x)\leadsto (\overline{\rho},x)$ if and only if $C(\rho)$ is a separating curve.
%\end{lemma}
%\noindent By convention, we consider a cycle consisting of a directed edge and its opposite to be separating.
%If $(\rho,x)\leadsto (\overline{\rho},x)$, let $f\colon V(G)\to \N$ count the number of times each vertex is visited. We know that the graph Laplacian $\Delta f$ We know that $\frac{f(y)}{\deg(y)}$ is an integer for any $y$ not lying on $C(\rho)$.  
%
%This isGiven a connected Riemannian manifold $M$ and a codimension 1 submanifold $N\subset M$, there exists a distribution $\psi$ whose Laplacian $\Delta \psi = \delta_N$ is a delta distribution supported on $N$ if and only if $N$ is separating in $M$.
%\begin{proof}
%If $C$ is a length two cycle consisting of $e^+$ and $e^-$, then the lemma is clear.  Otherwise, run the rotor-routing process starting at $(\rho,y)$ until $(\rho,y)$ occurs again.  Each rotor makes precisely one full rotation in this process, and the state $(\overline{\rho},y)$ occurs at some intermediate time.  Let $E^-$ (respectively $E^+$) be the set of edges not in $C$ that the chip traverses before (respectively after) $(\overline{\rho},y)$ occurs.  Since each edge is traversed exactly once, the sets $E^-$ and $E^+$ are disjoint... wait... the lemma needs fixing
%\end{proof}

\begin{proposition}\label{prop:planar_is_reversible}
A connected ribbon graph $G$ without loops is planar if and only if all cycles on $G$ are reversible.
\end{proposition}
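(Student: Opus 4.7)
My approach splits into the two directions of the equivalence. The easy direction, ``planar $\Rightarrow$ all cycles reversible,'' is immediate from the preceding results: by definition of a planar ribbon graph every cycle is separating, and by Lemma~\ref{lemma:almostseparating} every separating cycle is reversible.

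For the converse I plan to argue the contrapositive: if $G$ is non-planar, I will exhibit a non-reversible cycle. Let $C$ be a non-separating cycle of minimum length $n$, which exists since $G$ is non-planar, and suppose for contradiction that $C$ is reversible. Then Lemma~\ref{lemma:almostseparating} gives a witness of length $1$: an edge $e$ with endpoints $x, x' \in C$ lying on the left of $C$ at $x$ and on the right of $C$ at $x'$. Splitting $C$ at $\{x, x'\}$ into arcs $A_1$ and $A_2$, I form the two cycles $C_1 = A_1 \cup \{e\}$ and $C_2 = A_2 \cup \{e\}$, and aim to show that one of them is a \emph{shorter} non-separating cycle, contradicting minimality of $n$.

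The core argument then has two pieces. First, at least one of $C_1, C_2$ is non-separating: the edge set of $C$ is the symmetric difference of those of $C_1$ and $C_2$ (the edge $e$ cancels), so $[C] = [C_1] + [C_2]$ in the $\mathbb{Z}/2$-cycle space $H_1(\Sigma; \mathbb{Z}/2)$ of the surface $\Sigma$ associated to $G$, and since $[C] \neq 0$ at least one of $[C_1], [C_2]$ is nonzero. Second, a length comparison: $|C_1| + |C_2| = n + 2$, so after labeling $A_1, A_2$ such that $|A_1| \leq |A_2|$ we get $|C_1| \leq \lfloor n/2 \rfloor + 1 < n$ whenever $n \geq 3$. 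If this shorter cycle $C_1$ is the non-separating one, minimality of $n$ is contradicted, completing the argument.

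The principal obstacle is the boundary case in which the only non-separating cycle among $\{C_1, C_2\}$ has length exactly $n$, which forces $|A_1| = 1$ and the bigon $C_1 = \{f, e\}$ (with $f = A_1$) to be separating. To close this case I would examine the ribbon-graph structure of the separating bigon $\{f, e\}$: either it bounds a disk in $\Sigma$, in which case $e$ and $f$ are homotopic and I can replace $f$ by $e$ in $C$ to obtain an equivalent non-separating cycle of length $n$, then rerun the cutting argument with a fresh witness while tracking a refined lexicographic complexity measure (length together with, say, the number of ``parallel-edge'' witnesses) to force a descent; or the bigon bounds a non-disk sub-surface, in which case non-planarity is confined to one side and a localized induction applies. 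The degenerate base case $n = 2$, where $C$ itself is a bigon on three parallel edges, would be handled by direct inspection of the cyclic orderings at the two endpoints.
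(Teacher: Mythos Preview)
Your easy direction matches the paper's. For the converse, your cutting argument is close to the paper's first step, but you are making it harder than necessary and then underestimating what remains.

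First, the boundary case you worry about does not actually occur. Instead of appealing to $\mathbb{Z}/2$-homology to conclude that \emph{at least one} of $C_1,C_2$ is non-separating, you can check directly from the cyclic orders that \emph{both} are: the arc $A_1$ is itself a witness that $C_1=A_2\cup\{e\}$ is non-separating (it lies on opposite sides of $C_1$ at $x$ and $x'$), and symmetrically $A_2$ witnesses that $C_2$ is non-separating. Since by hypothesis all cycles are reversible, Lemma~\ref{lemma:almostseparating} then forces each of $P$, $A_1$, $A_2$ to have length~$1$, so every non-separating cycle has length exactly~$2$. This is what the paper does, and it eliminates your ``principal obstacle'' outright---no lexicographic descent or disk-bounding casework is needed.

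Second, and more seriously, the case $n=2$ is not a degenerate base case to be dispatched by ``direct inspection of the cyclic orderings.'' It is the crux of the entire argument and occupies roughly half of the paper's proof. One is left with three edges $A_1,A_2,A_3$ between $x$ and $x'$ whose cyclic orders agree at both endpoints, and must derive a contradiction from the assumption that all three bigons $C_i=A_{i+1}\cup \overline{A_{i+2}}$ are reversible. The paper does this by first observing that \emph{any} path from $x$ to $x'$ avoiding $x,x'$ internally witnesses some $C_i$ non-separating (pigeonhole on sides), hence has length~$1$; then performing a reduction so that no edge to $x'$ lies between $A_2$ and $A_3$ at $x$; and finally running a specific sequence of rotor-routing processes and reversals on a unicycle with cycle $C_3$, tracking the rotors at $x$ and $x'$ to reach an impossible state. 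None of this is visible from the cyclic orders alone---it genuinely uses the dynamics of rotor-routing and the definition of reversibility. Your proposal, as written, has a real gap here.
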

\begin{proof}
If $G$ is planar, the Jordan Curve Theorem implies that every cycle is separating, so Lemma~\ref{lemma:almostseparating} implies that every cycle is reversible.  

For the converse, suppose $G$ is not planar, so it has at least one nonseparating cycle, but that every cycle on $G$ is reversible. Our first claim is that every nonseparating cycle on $G$ has length 2. Indeed,  let $P$ be a path witnessing that the cycle $C$ is nonseparating, joining the left side of $x\in C$ to the right side of  $x'\in C$. This path splits $C$ into two directed arcs from $x$ to $x'$. Call them  $A_1$ and $A_2$, labeled so that $C=A_1\sqcup \ov{A}_2$. Let $C_1$ be the cycle $A_2\sqcup \ov{P}$, and let $C_2$ be the cycle $P\sqcup \ov{A_1}$. The arc $A_1$ is now a witness that $C_1$ is nonseparating, since $A_1$ lies on the left of $C_1$ at $x$ but on the right of $C_1$ at $x'$.  Similarly $A_2$ witnesses that $C_2$ is nonseparating, since $A_2$ lies on the right of $C_2$ at $x$ but on the left of $C_2$ at $x'$. Since all three cycles $C$, $C_1$, and $C_2$ are reversible by assumption,
 Lemma~\ref{lemma:almostseparating}  implies that the paths $P$, $A_2$ and $A_1$ each have length 1. In particular, the original cycle $C=A_1\sqcup\ov{A_2}$ has length 2, as claimed. 

Let us rename the edge $P$ to $A_3$, and the cycle $C$ to $C_3$, to expose the cyclic symmetry of our notation. Then the cyclic order of these edges at $x$ and $x'$ is the same, namely $A_1,A_2,A_3$ and $\ov{A}_1,\ov{A}_2,\ov{A}_3$ respectively, and $C_i=A_{i+1}\ov{A}_{i+2}$ with indices modulo 3.
See Figure~\ref{fig:a1a2a3}.  Let us say that an edge at $x$ \emph{lies between} $A_i$ and $A_{i+1}$, for $i=1$, $2$, or $3$, if it comes after $A_i$ and before $A_{i+1}$ in the cyclic order of edges at $x$.  Similarly, we will say that an edge at $x'$ lies between $\ov{A_{i+1}}$ and $\ov{A_i}$ if it comes after $\ov{A_{i+1}}$ and before $\ov{A_i}$ in the cyclic ordering of edges at $x'$.  

\begin{figure}%
\begin{center}
\includegraphics[width=1.5in]{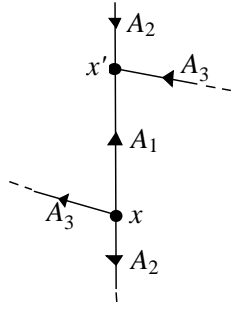}%
\end{center}
\caption{The edges $A_1, A_2,$ and $A_3$ in the proof of Proposition~\ref{prop:planar_is_reversible}.  The orientation at each vertex is clockwise with respect to the page.}
\label{fig:a1a2a3}%
\end{figure}

Before moving on, we observe that \emph{any} path from $x$ to $x'$ not passing through $x$ or $x'$ except at its endpoints will witness that one of the cycles $C_i$ is nonseparating, so by Lemma~\ref{lemma:almostseparating} any such path has length 1.
Indeed, each edge at $x$ other than the $A_i$ lies on the right side of exactly one cycle $C_i$, and on the left side of the other two; similarly, each edge at $x'$ lies on the left of exactly one $C_i$ and on the right of the other two. Thus the pigeonhole principle implies that no path from $x$ to $x'$ can lie on the same side of $C_i$ for all three simultaneously.

We may also assume that no edge $e$ connects $x$ to $x'$ and lies between $A_2$ and $A_3$ at $x$, after repeating the following reduction step.  Suppose $e$ is such an edge.  If $e$ lies between $\ov A_1$ and $\ov A_2$ at $x'$, then replace $A_2$ with $e$.  If $e$ lies between $\ov A_1$ and $\ov A_3$ at $x'$, then replace $A_3$ with $e$.  If $e$ lies between $\ov A_2$ and $\ov A_3$ at $x'$, then replace either $A_2$ or $A_3$ with $e$.  In each case, this reduces the number of edges at $x$ between $A_2$ and $A_3$ while preserving the cyclic orderings of $A_1,A_2,A_3$ and $\ov A_1,\ov A_2,\ov A_3$ at $x$ and $x'$, so we may repeat this step until there are no edges to $x'$ lying between $A_2$ and $A_3$ at $x$.

Let $(\rho,x)$ be any unicycle with $C(\rho)=C_3$ (e.g.\ by adding $A_1$ to a spanning tree that uses $A_2$; see \S\ref{sec:spanningtrees} for details). We have $\rho[x]=A_1$ and $\rho[x']=\ov{A}_2$ by definition. Define $\sigma$ by $(\ov{\rho},x)\leadsto_{x'}(\sigma,x')$. We had $\ov{\rho}[x]=A_2$ and $\ov{\rho}[x']=\ov{A}_1$, and the latter implies $\sigma[x']=\ov{A}_1$. But we know moreover that $\sigma[x]=A_3$.  This is because in the rotor routing process $(\ov \rho, x) \leadsto (\sigma, x')$, starting from the last time the chip leaves $x$ and ending at its arrival at $x'$, the chip traces out a path from $x$ to $x'$.  That path has length 1 by the argument above, which means that the chip arrives for the first time at $x'$ along some edge from $x$.  But we have assumed that $A_3$ is the first such edge that the rotor at $x$ encounters when it starts at $A_2$. 

Finally, define $\tau$ by $(\ov{\sigma},x')\leadsto_x(\tau,x)$. We had  $\ov{\sigma}[x]=A_1$ and $\ov{\sigma}[x']=\ov{A_3}$, and the former implies $\tau[x]=A_1$.
We have assumed that all cycles are reversible, so $(\rho,x)\corr(\ov{\rho},x)$ and $(\sigma,x')\corr (\ov{\sigma},x')$. By transitivity this implies $(\rho,x)\corr (\tau,x)$; since $\tau[x]=\rho[x]=A_1$, it must be that $\tau=\rho$. However, this would mean that the rotor $\tau[x']$ ends up at $\ov{A}_2$, which is impossible: this rotor began at $\ov{\sigma}[x']=\ov{A}_3$, so the first edge to $x$ that it encounters cannot be $\ov{A}_2$ (it would encounter $\ov A_1$ first, if not some other edge). This contradiction completes the proof that not all cycles in $G$ can be reversible if $G$ has nonseparating cycles.
\end{proof}

%\begin{remark}
%The last two paragraphs of this proof can be replaced by a purely graph-theoretic and topological argument based on Euler characteristic. In particular, by using this argument, it can be shown that a connected ribbon graph $G$ is planar if and only if the only witnesses of nonseparating cycles are of length 1. Proposition~\ref{prop:planar_is_reversible} then would then follow directly from Lemma~\ref{lemma:almostseparating} without further recourse to rotor-routing. 
%\end{remark}

\section{Spanning trees and basepoint-independence} \label{sec:spanningtrees}

%I'd like to begin this section by discussing how a tree and an edge make a unicycle (and a choice of vertex makes it directed), and conversely how removing an edge from the \emph{cycle} of a unicycle always gives a spanning tree. If either of you is up for writing such a discussion, please do!

\para{Spanning trees and unicycles}
The reason that we studied unicycles so carefully in the previous section is that they are closely related to spanning trees, as we now explain.  Let $T\in \T(G)$ be a spanning tree.  Any two vertices $x$ and $y$ are connected by a unique geodesic path in $T$; we write $\gamma_T(x,y)$ for that path, oriented from $x$ to $y$. Therefore if $e$ is an edge of $G$ connecting $y$ to $x$, the undirected graph $T\sqcup e$ contains a unique cycle. 

Now given a directed edge $e$ from $y$ to $x$ and a spanning tree $T$, define a rotor configuration $\rho_e(T)$ as follows.  Let $T_y$ denote the collection of directed edges obtained from $T$ by directing each edge towards the ``root'' $y$. Then the collection of directed edges $T_y\sqcup e$ forms a rotor configuration $\rho_e(T)$ with unique directed cycle $\gamma_T(x,y)\sqcup e$.  We will denote this directed cycle by $C_e(T)$. If $T$ already contains $e$, then $C_e(T)$ is the length 2 cycle consisting only of $e\sqcup \ov{e}$; note that this cycle is trivially reversible in any ribbon graph. 

If we took instead the edge $\ov{e}$ from $x$ to $y$, the rotor configuration $\rho_{\ov{e}}(T)$ comes from the directed graph $T_x\sqcup \ov{e}$, with directed cycle $C_{\ov{e}}(T)=\gamma_T(y,x)\sqcup \ov{e}$. The following observation  will be fundamental for us.
\begin{lemma}
\label{lemma:ovrho}We have
\[
\ov{\rho_e(T)}=\rho_{\ov{e}}(T).\mathqedhereforthm
\]
\end{lemma}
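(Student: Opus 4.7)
The plan is to verify the identity $\ov{\rho_e(T)}[v] = \rho_{\ov{e}}(T)[v]$ at every vertex $v$, splitting into the cases $v \notin C_e(T)$ and $v \in C_e(T)$; the lemma then follows since rotor configurations are determined pointwise.

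First, for $v \notin C_e(T)$, I would observe that $v$ does not lie on the tree path $\gamma_T(x,y)$, so the $T$-geodesics $\gamma_T(v,x)$ and $\gamma_T(v,y)$ must agree on an initial segment until they meet $\gamma_T(x,y)$ at a common first vertex $w \neq v$. In particular they share their first edge, so $T_x$ and $T_y$ assign the same rotor at $v$, giving $\rho_e(T)[v] = \rho_{\ov{e}}(T)[v]$. Since reversing the cycle $C_e(T)$ does not alter rotors outside $C_e(T)$, this implies $\ov{\rho_e(T)}[v] = \rho_e(T)[v] = \rho_{\ov{e}}(T)[v]$.

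For $v \in C_e(T)$, I would parametrize the cycle as $x = v_0, v_1, \ldots, v_k = y$ along $\gamma_T(x,y)$, closed by the edge $e$ from $v_k$ back to $v_0$. By construction $\rho_e(T)[v_i]$ is the directed edge from $v_i$ to $v_{i+1}$ (indices taken mod $k+1$): at $y = v_k$ this is $e$, and at any other $v_i$ it is the first edge of $\gamma_T(v_i,y)$. Hence $\ov{\rho_e(T)}[v_i]$ is the reversed edge from $v_i$ to $v_{i-1}$. I would then verify that $\rho_{\ov{e}}(T)[v_i]$ agrees: at $v_0 = x$ the value is $\ov{e}$, which goes from $x$ back along $e$ to $v_k = v_{-1}$; at every other $v_i$, including $v_k = y$, the value is the first edge of $\gamma_T(v_i,x)$, which by the parametrization is exactly $v_i \to v_{i-1}$. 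The degenerate case $e \in T$ (where $k = 1$ and $C_e(T)$ is the length-2 cycle $e \sqcup \ov{e}$) is automatically subsumed.

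The main (minor) obstacle is bookkeeping with the orientations at the endpoints $x$ and $y$ of $e$, where the cyclic wraparound interchanges tree rotors with $e$ or $\ov{e}$. I do not expect any genuine difficulty beyond this unpacking of the definitions of $T_y$, $T_x$, and cycle reversal.
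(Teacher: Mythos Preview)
Your proposal is correct and follows essentially the same approach as the paper: a pointwise verification splitting into vertices on and off the cycle $C_e(T)$, using that off-cycle the geodesics $\gamma_T(v,x)$ and $\gamma_T(v,y)$ share an initial segment, and on-cycle the rotors are simply reversed. The paper handles the on-cycle case more tersely by noting $C_{\ov{e}}(T)=\ov{C_e(T)}$ rather than parametrizing explicitly, but the content is identical.
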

\begin{proof}
For $s\in \gamma_T(x,y)$ this follows because 
\[
C_{\ov{e}}(T)=\gamma_T(y,x)\sqcup \ov{e} =\ov{\gamma_T(x,y)\sqcup e}=\ov{C_e(T)}.\]
%\begin{align*}
%C_{\ov{e}}(T)&=\gamma_T(y,x)\sqcup \ov{e}\\
%&=\ov{\gamma_T(x,y)\sqcup e}=\ov{C_e(T)}.
%\end{align*} 
For vertices $z\not\in \gamma_T(x,y)$, we have $T_x[z]=T_y[z]$, since the geodesics $\gamma_T(z,x)$ and $\gamma_T(z,y)$ share an initial segment. Then $\rho_e(T)[z]=\rho_{\ov e}(T)[z]$, and $\rho_e(T)[z] = \ov{\rho_e(T)}[z]$ since $z$ is not on the cycle.
\end{proof}

%\footnote{I changed notation because there may be several edges between x and y.} 

Given a spanning tree $T\in \T$ and an edge $e$ from $y$ to $x$, the rotor configuration $(\rho_e(T),y)$ is a unicycle since $y\in C_e(T)$.
Conversely, given a unicycle $(\rho,v)$, deleting the directed edge $\rho[v]$ yields a spanning tree $T_v(\rho)$ with all edges directed towards the root $v$.
This correspondence between spanning trees and rotor configurations lets us partially model  the action of the sandpile group on spanning trees.

\begin{proposition}[\textbf{Rotor-routing process computes sandpile action}]
\label{prop:rotorscompute}
Given a spanning tree $T$ in a ribbon graph $G$, let $e$ be an edge not in $T$, and $s$ any vertex on the $T$-geodesic between the endpoints $y,x$ of $e$ (including $x$ or $y$). Let $\rho=\rho_e(T)$ (so that $T = T_y(\rho)$). Then $(\rho, s)$ is a unicycle, and if $\sigma$ is the rotor configuration defined by $(\rho,s)\leadsto_y (\sigma,y)$, then
\[(s-y)_y(T)=T_y(\sigma).\mathqedhereforthm\]
\end{proposition}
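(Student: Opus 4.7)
The plan is to compare the rotor-routing process defining the sandpile action $(s-y)_y(T)$ with the unicycle process $(\rho,s)\leadsto_y(\sigma,y)$, and to exhibit them as the same dynamics running in parallel.

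First I would verify that $(\rho,s)$ is a unicycle. By construction, $\rho=\rho_e(T)$ has the unique directed cycle $C_e(T)=\gamma_T(x,y)\sqcup e$, and by hypothesis $s$ lies on $\gamma_T(x,y)\subseteq C_e(T)$, so $(\rho,s)$ is indeed a unicycle.

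The main observation is that $\rho$ and $T_y$ agree at every vertex except $y$: we have $\rho[z]=T_y[z]$ for $z\neq y$, while $\rho[y]=e$ and $T_y$ has no rotor at $y$. In the sandpile-action setup one iterates rotor-routing on $\underline{G}$ with the edges leaving $y$ deleted, starting from the chip at $s$, and stops when the chip first reaches $y$; the definition of $\leadsto_y$ imposes exactly the same stopping condition. Provided $s\neq y$, the chip never occupies $y$ before termination, so the rotor at $y$ is never advanced in the unicycle process and the extra edges leaving $y$ are never used. Hence, step for step, the two processes take identical transitions and produce identical rotor configurations at every vertex other than $y$. In the sandpile setup this final configuration is $(s-y)_y(T)$ oriented towards $y$; in the unicycle setup it is the restriction of $\sigma$ to $V(G)\setminus\{y\}$, which by definition is $T_y(\sigma)$, so we conclude $(s-y)_y(T)=T_y(\sigma)$.

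The one wrinkle is the degenerate case $s=y$, where $(s-y)_y=0_y$ gives back $T$; here the chip starts at $y$ and the ``chip never visits $y$'' argument does not apply directly. But by Lemma~\ref{lemma:therotorknows} there is a unique $\sigma$ with $(\rho,y)\leadsto_y(\sigma,y)$, and since after one full period of rotor-routing the unicycle returns to $(\rho,y)$ by Lemma~\ref{lemma:periodic}, uniqueness forces $\sigma=\rho$ and hence $T_y(\sigma)=T$ as required. The only real subtlety in the whole argument is this bookkeeping of conventions, rotor at $y$ versus no rotor at $y$ and positive-step $\corr$ versus zero-step stopping, since the two processes really are the same dynamics running in parallel.
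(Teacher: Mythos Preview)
Your proof is correct and follows essentially the same approach as the paper: both arguments identify $\rho$ with $T_y\sqcup e$, note that the rotor at $y$ is never touched before termination, and conclude that the two rotor-routing processes run in lockstep. Your treatment is slightly more careful in one respect: you explicitly handle the degenerate case $s=y$ via Lemma~\ref{lemma:periodic} and Lemma~\ref{lemma:therotorknows}, whereas the paper's phrase ``$y$ is not reached until the last step'' tacitly assumes $s\neq y$; this is a minor point of rigor rather than a different method.
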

\begin{proof}
The rotor configuration $\rho$ is equal to $T_y\sqcup e$, where $T_y$ is obtained by directing each edge of $T$ towards $y$. The configuration $\rho$ then has a unique cycle, namely $\gamma_{T}(x,y) \sqcup e$, and by assumption $s$ is on that cycle, so $(\rho, s)$ is a unicycle. Next, by definition, $(s-y)_y(T)$ is obtained by beginning with $T_y$, placing the chip at $s$, and running the rotor-routing process until the chip arrives at $y$ for the first time. The presence of the rotor $\rho[y]=e$ does not affect this process, since $y$ is not reached until the last step. Therefore the rotor-routing process $(\rho,s)\leadsto (\sigma,y)$ takes precisely the same steps as the process $T\leadsto (s-y)_y(T)$. We conclude that $\sigma=(s-y)_y(T)\sqcup e$. Since $\sigma=T_y(\sigma)\sqcup e$ by definition, this proves the lemma.
\end{proof}
\pagebreak

\para{Basepoint-independence and reversible cycles}
%\label{sec:basepointindependence}
%We can now use rotor-routing via Proposition~\ref{prop:rotorscompute} to relate basepoint-independence with reversibility of cycles.  
Before giving our last few lemmas and proving our main theorem, let us give some intuition as to why reversibility should be at all related to basepoint-independence. Consider a spanning tree $T_0$, and let $x$ and $y$ be adjacent vertices connected by an edge $e$ from $x$ to $y$. Suppose that we wish to compute $D'_y(D_x(T_0))$ for some divisors $D,D'\in \Div^0(G)$. The first step is to compute $T\coloneq D_x(T_0)$. By Proposition~\ref{prop:rotorscompute}, the computation of $D_x(T_0)$ can be modeled (at least for some divisors) by a rotor-routing process; this process terminates with the rotor configuration $T_x$, which is the spanning tree $T$ oriented towards the basepoint $x$.

The next step is to compute $D'_y(T)$, and to do this we first need to re-orient the tree $T$ towards the basepoint $y$. This does not affect any edges except those on the $T$-geodesic between $x$ and $y$, which are reversed. If we add the edge $e$ to our rotor configuration $T_x$ to obtain $\rho_e(T)=T\sqcup e$, the reversal of the $T$-geodesic $\gamma_T(x,y)$ amounts to the reversal of the cycle $\gamma_T(x,y)\sqcup e$ in the rotor configuration $\rho_e(T)$. If this cycle is reversible, then we can compute both steps $T_0\mapsto T\coloneq D_x(T_0)$ and $T\mapsto D_y(T)$ as part of a \emph{single rotor-routing process}, even though these actions involve two \emph{different} basepoints. This basic  observation is the heart of the relation between reversibility and basepoint-independence, upon which our main theorem rests. The next lemma capitalizes on this observation.

\begin{lemma}
\label{lemma:ongeodesic}
Given a spanning tree  $T$ in a ribbon graph $G$ and an edge $e$ from $y$ to $x$, let $s$ be any vertex on the  geodesic $\gamma_T(x,y)$ (including $s=x$ or $s=y$), and set $T'=(s-y)_y(T)$.
\begin{enumerate}[(a)]
\item If $C_e(T)$ and $C_e(T')$ are both reversible, then \begin{equation}
\label{eq:ongeodesic} (y-x)_x\big((s-y)_y(T)\big)=(s-x)_x(T).
\end{equation}
\item If \eqref{eq:ongeodesic} holds, then either $C_e(T)$ and $C_e(T')$ are both reversible, or $C_e(T)$ and $C_e(T')$ are both non-reversible.\qedhere
\end{enumerate}
\end{lemma}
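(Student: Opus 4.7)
The plan is to translate both sides of \eqref{eq:ongeodesic} into rotor-routing stopping states via Proposition~\ref{prop:rotorscompute}, and then compare those states using the equivalence relation $\corr$ and the uniqueness clause of Lemma~\ref{lemma:therotorknows}. Setting $\rho = \rho_e(T)$ and defining $\sigma$ by $(\rho, s) \leadsto_y (\sigma, y)$, Proposition~\ref{prop:rotorscompute} gives $T' = T_y(\sigma)$, and in particular $\sigma = \rho_e(T')$. By Lemma~\ref{lemma:ovrho}, $\ov\rho = \rho_{\ov e}(T)$ and $\ov\sigma = \rho_{\ov e}(T')$, and a direct inspection of these shows $\ov\rho[x] = \ov\sigma[x] = \ov e$ (because in $T_x$ and $T'_x$ every edge at $x$ is incoming, so the only outgoing rotor after adjoining $\ov e$ is $\ov e$ itself). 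Applying Proposition~\ref{prop:rotorscompute} once more in the $\ov e$-direction (legal because $s\in \gamma_T(y,x)=\gamma_T(x,y)$ and $y\in\gamma_{T'}(x,y)$ trivially), define $\tau'$ and $\tau$ by $(\ov\rho, s) \leadsto_x (\tau', x)$ and $(\ov\sigma, y) \leadsto_x (\tau, x)$; this yields $(s-x)_x(T) = T_x(\tau')$ and $(y-x)_x(T') = T_x(\tau)$. Since $\tau[x]=\tau'[x]=\ov e$ and a unicycle is recovered from its underlying spanning tree together with the rotor at the chip vertex, equation~\eqref{eq:ongeodesic} is equivalent to the rotor-configuration equality $\tau = \tau'$.

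With that reduction, I would lay out the key $\corr$-links. Automatic from the definitions are
\[
(\rho, s) \corr (\sigma, y), \qquad (\ov\sigma, y) \corr (\tau, x), \qquad (\ov\rho, s) \corr (\tau', x).
\]
Via Proposition~\ref{prop:reversibility2}, applied to the specific unicycles $(\rho, s)$ and $(\sigma, y)$ (noting $s \in C_e(T)$ and $y \in C_e(T')$), reversibility of $C_e(T)$ is equivalent to $(\rho, s) \corr (\ov\rho, s)$, and reversibility of $C_e(T')$ is equivalent to $(\sigma, y) \corr (\ov\sigma, y)$. For part~(a), if both cycles are reversible, concatenating all the links gives both $(\rho, s) \corr (\tau, x)$ and $(\rho, s) \corr (\tau', x)$; since $\tau[x]=\tau'[x]=\ov e$, Lemma~\ref{lemma:therotorknows} forces $\tau = \tau'$, yielding \eqref{eq:ongeodesic}.

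For part~(b), assume \eqref{eq:ongeodesic}, so $\tau = \tau'$. Combining this with the three automatic links above, both $(\rho, s) \corr (\sigma, y)$ and $(\ov\rho, s) \corr (\ov\sigma, y)$ hold unconditionally. Since $\corr$ is an equivalence relation, transitivity makes $(\rho, s) \corr (\ov\rho, s)$ equivalent to $(\sigma, y) \corr (\ov\sigma, y)$, which by the reversibility criteria above is precisely the assertion that $C_e(T)$ is reversible iff $C_e(T')$ is reversible. The principal obstacle I anticipate is not conceptual but notational bookkeeping --- keeping straight the orientations $e$ versus $\ov e$ and which tree ($T$ or $T'$) each configuration carries --- since the entire argument rests on the two identifications $\sigma = \rho_e(T')$ and $\ov\rho[x] = \ov\sigma[x] = \ov e$, together with the formal manipulation of $\corr$ and Lemma~\ref{lemma:therotorknows}.
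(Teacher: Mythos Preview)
Your proposal is correct and takes essentially the same approach as the paper: you set up the same six unicycle states (your $\tau'$ is the paper's $\psi$), identify the three automatic $\corr$-links, translate \eqref{eq:ongeodesic} into $\tau=\tau'$ via Lemma~\ref{lemma:therotorknows}, and then deduce both (a) and (b) by transitivity of $\corr$. The only cosmetic difference is that the paper organizes the six states into a hexagonal diagram, whereas you spell out the chains of equivalences in words.
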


\begin{proof} Let $\rho=\rho_e(T)$; as in Proposition~\ref{prop:rotorscompute}, $(\rho, s)$ is a unicycle. Define rotor configurations $\psi$, $\sigma$, and $\tau$ by
\[\begin{array}{rclcl}
(\overline{\rho},s)&&&\leadsto_x&(\psi,x)\\
(\rho,s)&\leadsto_y&(\sigma,y)\\
&&(\overline{\sigma},y)&\leadsto_x&(\tau,x)
\end{array}\]
Note that $\psi[x]=\ov{\rho}[x]=\ov{e}$, and similarly $\sigma[y]=\rho[y]=e$ implies that $\tau[x]=\ov{\sigma}[x]=\ov{e}$.
In fact, 
by Lemma~\ref{lemma:ovrho} and Proposition~\ref{prop:rotorscompute}, we have
\begin{itemize}
\item $\psi=(s-x)_x(T) \sqcup \ov{e}$
\item $\sigma= \qquad\qquad(s-y)_y(T) \sqcup e$
\item $\tau=(y-x)_x((s-y)_y(T)) \sqcup \ov{e}$
\end{itemize}
Thus we see that equality \eqref{eq:ongeodesic} is exactly the condition $\tau=\psi$. Moreover, since $\psi[x]=\tau[x]=\ov{e}$, Lemma~\ref{lemma:therotorknows} implies that this is equivalent to $(\psi,x)\corr (\tau,x)$.
Consider the following diagram relating these six rotor configurations:
\[\xymatrix{
  &(\rho,s)\ar@{2{<}~2{>}}[rr]\ar_{\txt{\large ?}}@{2{<}~2{>}}[dl]&&(\sigma,y)\ar^{\txt{\large ?}}@{2{<}~2{>}}[dr]\\
  (\ov{\rho},s)\ar@{2{<}~2{>}}[dr]&&&&(\ov{\sigma},y)\ar@{2{<}~2{>}}[dl]\\
  &(\psi,x)\ar_{\txt{\large ?}}@{2{<}~2{>}}[rr]&&(\tau,x)
}\] Three of these edges hold by definition, namely $(\rho,s)\corr (\sigma,y)$, $(\overline{\sigma},y)\corr (\tau,x)$, and 
$(\overline{\rho},s)\corr (\psi,x)$. By the results of Section~\ref{sec:reversibility}, the upper left edge $(\rho,s)\corr (\ov{\rho},s)$ is equivalent to the cycle $C(\rho)=C_e(T)$ being reversible. Similarly the upper right edge is equivalent to $C(\sigma)=C_e(T')$ being reversible. Finally, we have already shown that the bottom edge is equivalent to the assertion \eqref{eq:ongeodesic}.
Therefore parts (a) and (b) of the lemma each assert that if two of the remaining three edges hold, the third does as well. But this follows immediately from the transitivity of this equivalence relation.\end{proof}

\begin{corollary}
\label{corollary:first}
Let $x$ and $y$ be adjacent vertices of a planar ribbon graph $G$, and let $T$ be a spanning tree. If $s\in \gamma_T(x,y)$, then 
% $(s-x)_y(T)=(s-x)_x(T)$.
$(s-y)_x(T)=(s-y)_y(T)$.
\end{corollary}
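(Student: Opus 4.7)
The plan is to reduce the claim to Lemma~\ref{lemma:ongeodesic}(a), exploiting the fact that in a planar ribbon graph every cycle is reversible.

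First, since $G$ is planar, Proposition~\ref{prop:planar_is_reversible} guarantees that every cycle in $G$ is reversible. Let $e$ be an edge from $y$ to $x$ (which exists since $x$ and $y$ are adjacent), and set $T' \coloneq (s-y)_y(T)$. Then both cycles $C_e(T)$ and $C_e(T')$ are reversible: if $e \in T$ (respectively $e \in T'$), the cycle $C_e(T)$ (respectively $C_e(T')$) has length $2$ and is trivially reversible, as noted before Lemma~\ref{lemma:ovrho}; otherwise, reversibility follows from planarity. Thus the hypotheses of Lemma~\ref{lemma:ongeodesic}(a) are satisfied, yielding
\[(y-x)_x\big((s-y)_y(T)\big) = (s-x)_x(T).\]

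Next, I would convert this into the desired statement using the group-action properties of the rotor-routing action at the fixed basepoint $x$. Because the action of $\Div^0(G)$ based at $x$ is a genuine (commutative) group action and $(y-x)+(x-y)=0$, the permutations $(y-x)_x$ and $(x-y)_x$ of $\T(G)$ are mutually inverse. Applying $(x-y)_x$ to both sides of the displayed equation and combining divisors at the same basepoint gives
\[(s-y)_y(T) = (x-y)_x\big((s-x)_x(T)\big) = \big((x-y)+(s-x)\big)_x(T) = (s-y)_x(T),\]
which is exactly the claim.

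There is no real obstacle here: the corollary is engineered to follow immediately once all cycles are known to be reversible, and the only thing to verify is that Lemma~\ref{lemma:ongeodesic}(a) remains applicable in the boundary cases $s=x$ or $s=y$ (both explicitly allowed by that lemma) and when $e$ happens to lie in $T$ or $T'$; in every such case the argument goes through unchanged.
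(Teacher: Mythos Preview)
Your proof is correct and follows essentially the same route as the paper: invoke reversibility of all cycles in a planar ribbon graph to satisfy the hypothesis of Lemma~\ref{lemma:ongeodesic}(a), then apply $(x-y)_x$ to both sides of~\eqref{eq:ongeodesic}. The only cosmetic difference is that the paper cites Lemma~\ref{lemma:almostseparating} (separating $\Rightarrow$ reversible) rather than Proposition~\ref{prop:planar_is_reversible}, and omits your explicit discussion of the degenerate case $e\in T$ or $e\in T'$.
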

%However this does hold for $s=y$, as we prove first.
\begin{proof} By Lemma~\ref{lemma:almostseparating}, the hypothesis of Lemma~\ref{lemma:ongeodesic}(a) is always satisfied if $G$ is planar, so 
$(y-x)_x\big((s-y)_y(T)\big)=(s-x)_x(T).$
Now applying $(x-y)_x$ to both sides, we conclude that $(s-y)_y(T) = (s-y)_x(T)$.
% For \emph{any} spanning tree $T$, we may apply Lemma~\ref{lemma:ongeodesic}(a) with $s=x$, since $x\in \gamma_T(x,y)$ always holds. This gives $(y-x)_x\big((x-y)_y(T)\big)=(x-x)_x(T)=T$. Applying $(x-y)_x$ to both sides yields the identity $(x-y)_y(T)=(x-y)_x(T)$. Since this holds for all $T\in T$, we in fact have $(x-y)_x=(x-y)_y$, or equivalently
% \begin{equation}\label{eq:adjacent}
% (y-x)_x=(y-x)_y.
% \end{equation} As long as $s\in \gamma_T(x,y)$, we can combine this with Lemma~\ref{lemma:ongeodesic}(a) to obtain
% \begin{align*}
% (s-x)_x(T)&=(y-x)_x\big((s-y)_y(T)\big)\qquad\quad&\text{by Lemma~\ref{lemma:ongeodesic}(a)}\\
% &=(y-x)_y\big((s-y)_y(T)\big)\qquad\quad&\text{by \eqref{eq:adjacent}}\\
% &=((y-x)+(s-y))_y(T)\\
% &=(s-x)_y(T)\end{align*}
% as desired.
\end{proof}
We emphasize that we are \emph{not} yet claiming that %$(s-x)_y=(s-x)_x$, 
$(s-y)_x=(s-y)_y$, 
since the condition of Corollary~\ref{corollary:first} that $s$ lies on the $T$-geodesic from $x$ to $y$ need not hold for all trees $T$. 

\begin{lemma}
\label{lemma:rununtil}
Let $T\in\T$ be a spanning tree of an arbitrary ribbon graph $G$. Let $x$, $z$, and $s$ be arbitrary 
vertices such that $s$ is the first vertex on $\gamma_T(s,x)$ visited by the rotor-routing process starting with $(T_x,z)$. Then \[(z-s)_x(T)=(z-s)_s(T).\mathqedhereforthm\]
\end{lemma}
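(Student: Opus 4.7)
The plan is to couple the two rotor-routing processes started at $(T_x,z)$ and at $(T_s,z)$, and use the hypothesis to read off both sides of the desired equation from a single computation. First I note that $T_x$ and $T_s$ agree on the rotor at every vertex $w\notin\gamma_T(s,x)$: for such $w$, the initial edge of the unique $T$-path from $w$ to $x$ coincides with that of the unique $T$-path from $w$ to $s$, since the two paths share a common initial stem of positive length. Along $\gamma_T(s,x)$, on the other hand, the two orientations are exactly reversed. By hypothesis the chip in $(T_x,z)$ does not visit any vertex of $\gamma_T(s,x)$ before arriving at $s$, and by the coupling the same holds for $(T_s,z)$; so the two processes take identical steps up to that moment.

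At that moment the $(T_s,z)$ process has reached its root, so it has terminated with final state $(T''_s,s)$, where $T''\coloneq(z-s)_s(T)$. I claim that the $(T_x,z)$ process is in state $(T''_x,s)$ at the same moment. Indeed, no vertex of $\gamma_T(s,x)\setminus\{s\}$ was visited in either process, so all rotors along that sub-path retain their initial values; in particular the edges of $\gamma_T(s,x)$ all lie in $T''$, and hence $T''_x$ differs from $T''_s$ precisely by reversing the rotors along $\gamma_T(s,x)$. This matches the $(T_x,z)$ state exactly: along $\gamma_T(s,x)$ its rotors are inherited unchanged from $T_x$, while off the path they coincide with those of the $(T_s,z)$ process, which equal $T''_s$.

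Finally, I continue the $(T_x,z)$ process from $(T''_x,s)$ until the chip first reaches $x$. By determinism of rotor-routing, this continuation is indistinguishable from starting afresh at $(T''_x,s)$, which by definition computes $(s-x)_x(T'')$. On the other hand, the entire process from $(T_x,z)$ to the first arrival at $x$ computes $(z-x)_x(T)$ by definition. Equating these and inverting $(s-x)_x$ in the abelian group action based at $x$ gives
\[T''=\bigl((z-x)-(s-x)\bigr)_x(T)=(z-s)_x(T),\]
which combined with $T''=(z-s)_s(T)$ yields the lemma (the degenerate case $z=s$ being trivial). The main subtlety is the identification of the $(T_x,z)$ state with $(T''_x,s)$; this relies on the observation $\gamma_T(s,x)\subseteq T''$, after which the remaining steps are formal manipulations of the definitions of $(v-r)_r$.
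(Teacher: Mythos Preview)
Your proof is correct and follows essentially the same approach as the paper's: both couple the processes $(T_x,z)$ and $(T_s,z)$ up to the first arrival at $s$, identify the $x$-rooted state there as $T''_x$ (the paper calls this tree $T'$), then continue to $x$ to obtain $(s-x)_x(T'')=(z-x)_x(T)$ and invert. Your explicit justification that $\gamma_T(s,x)\subseteq T''$ (hence $\gamma_{T''}(s,x)=\gamma_T(s,x)$) is a small elaboration on a step the paper leaves implicit, but the argument is otherwise identical.
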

\begin{proof}
Define $T'_s$ by $(T_s,z)\leadsto_s(T'_s,s)$; by definition, the underlying spanning tree $T'$ of $T'_s$ is equal to $(z-s)_s(T)$. Similarly, define $T''_x$ by $(T_x,z)\leadsto_x(T''_x,x)$, with underlying spanning tree $T''=(z-x)_x(T)$.

%this proof could use some work ;)
% -TC
As we noted in the proof of Lemma~\ref{lemma:ovrho}, the configurations $T_x$ and $T_s$ coincide for all vertices $w\not\in\gamma_T(s,x)$, while on this geodesic, the difference between $T_x$ and $T_s$ is that  $\gamma_T(s,x)$ is reversed and thus replaced by $\gamma_T(x,s)$. Therefore if the rotor-routing processes are run in parallel starting with $(T_x,z)$ and $(T_s,z)$, the same steps will be taken until the chip first arrives at $\gamma_T(s,x)$. By our assumption on $s$, this occurs when the chip reaches $s$, i.e.\ when the second process reaches $(T'_s,s)$. The first process coincides except that the geodesic between $x$ and $s$ is  directed towards $x$ instead. But by the first sentence of this paragraph, this is the directed tree $T'_x$ obtained from $T'$ by directing its edges towards $x$. To sum up, we have observed that $(T_x,z)\leadsto_s(T'_x,s)$.

By definition, $(s-x)_x(T')$ can be computed by starting with $(T'_x,s)$ and running the rotor-routing process until the chip reaches $x$.
The above shows that the first process reaches $(T'_x,s)$ as an intermediate stage; therefore  continuing this process eventually yields $(T''_x,x)$, since $(T_x,z)\leadsto_x(T''_x,x)$ by definition. This shows that $(s-x)_x(T')=T''$. Expanding out our notation, this says that
\[(s-x)_x\big((z-s)_s(T)\big)=(z-x)_x(T).\]
Applying $(x-s)_x$ to both sides yields $(z-s)_s(T)=(z-s)_x(T)$, as desired.\end{proof}

\begin{corollary}
\label{corollary:fulladjacent}
Let $T$ be a spanning tree of a planar ribbon graph $G$, and let $x$ and $y$ be adjacent vertices. Then for any vertex $z$ we have \[(z-x)_x(T)=(z-x)_y(T).\mathqedhereforthm\]
\end{corollary}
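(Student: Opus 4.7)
The plan is to introduce an auxiliary spanning tree $T^\ast$ at which Corollary~\ref{corollary:first} can be applied, and to use Lemma~\ref{lemma:rununtil} to bridge the divisor $z-x$ to a divisor to which the corollary pertains.  Planarity enters only through the corollary; Lemma~\ref{lemma:rununtil} holds for every ribbon graph.

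Define $s$ to be the first vertex of $\gamma_T(y,x)$ that the rotor-routing process starting at $(T_x,z)$ visits.  Because $T_x$, $T_y$, and $T_s$ coincide on every edge outside $\gamma_T(x,y)$, the processes $(T_x,z)$, $(T_y,z)$, and $(T_s,z)$ take identical steps until the chip first touches $\gamma_T(x,y)$, which by construction occurs precisely when it arrives at $s$.  Since $\gamma_T(s,x)$ and $\gamma_T(s,y)$ are both contained in $\gamma_T(y,x)$, the vertex $s$ satisfies the hypothesis of Lemma~\ref{lemma:rununtil} with either $x$ or $y$ in the role of the root.  Applying that lemma twice yields
\[(z-s)_x(T) \;=\; (z-s)_s(T) \;=\; (z-s)_y(T),\]
and I will call this common value $T^\ast$.

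Next I would verify the geometric claim $\gamma_{T^\ast}(x,y)=\gamma_T(x,y)$, which in particular puts $s\in \gamma_{T^\ast}(x,y)$.  The rotor-routing process $(T_s,z)\leadsto_s (T^\ast_s,s)$ never visits a vertex of $\gamma_T(y,x)\setminus\{s\}$, so the rotors at those vertices are unchanged between $T_s$ and $T^\ast_s$; direct inspection shows that these unchanged rotors already realize every edge of $\gamma_T(x,y)$ as an edge of $T^\ast$, forcing the unique $(x,y)$-path in $T^\ast$ to coincide with $\gamma_T(x,y)$.  Corollary~\ref{corollary:first} applied to $T^\ast$ with the roles of $x$ and $y$ interchanged then gives $(s-x)_x(T^\ast) = (s-x)_y(T^\ast)$.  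Using commutativity of the sandpile action at a fixed basepoint,
\begin{align*}
(z-x)_x(T) &= (s-x)_x\bigl((z-s)_x(T)\bigr) = (s-x)_x(T^\ast) \\
&= (s-x)_y(T^\ast) = (s-x)_y\bigl((z-s)_y(T)\bigr) = (z-x)_y(T).
\end{align*}

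The main obstacle is the geometric claim $\gamma_T(x,y)\subseteq T^\ast$: one must track which rotors remain unrotated during the process producing $T^\ast$ and confirm that they already witness every edge of the geodesic.  Everything else is formal manipulation of the two results established earlier in this section.
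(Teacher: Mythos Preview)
Your proof is correct and follows essentially the same route as the paper's: define $s$ as the first vertex of $\gamma_T(x,y)$ hit by the process $(T_x,z)$, use Lemma~\ref{lemma:rununtil} twice to get $(z-s)_x(T)=(z-s)_s(T)=(z-s)_y(T)=T^\ast$, observe that $\gamma_{T^\ast}(x,y)=\gamma_T(x,y)$ because the rotors along the geodesic were never touched, then apply Corollary~\ref{corollary:first} to $T^\ast$ and combine. Your justification of the geodesic-preservation step is in fact slightly more explicit than the paper's, so the ``main obstacle'' you flag is not a genuine gap.
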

\begin{proof}
Let $s$ be the first vertex on $\gamma_T(x,y)$ that is reached by the rotor-routing process starting with $(T_x,z)$. Note that $s$ is \emph{a fortiori} the first vertex visited on $\gamma_T(x,s)$ or on $\gamma_T(s,y)$ as well. We use this to apply Lemma~\ref{lemma:rununtil} twice, giving
\begin{equation}
\label{eq:ZS}
(z-s)_x(T)=(z-s)_s(T)=(z-s)_y(T).
\end{equation}
Write $T'$ for the tree $(z-s)_s(T)$ appearing in \eqref{eq:ZS}.  Notice that by definition of $s$, in the rotor routing process that computes $(z-s)_s(T)$, the chip never touches any vertex on $\gamma_T(x,y)$ until it reaches $s$, so $\gamma_{T'}(x,y) = \gamma_T(x,y)$.   So $s$ lies on $\gamma_{T'}(x,y).$  By Corollary~\ref{corollary:first} applied to $T'$ (with the roles of $x$ and $y$ reversed), we have
\[(s-x)_x(T')=(s-x)_y(T').\] The desired result follows once this equation is combined with \eqref{eq:ZS}, by applying $(s-x)_x$ to the left-hand side of \eqref{eq:ZS} and applying $(s-x)_y$ to the right-hand side of \eqref{eq:ZS}.
\end{proof}

We can now prove the main theorem.
\begin{proof}[Proof of Theorem~\ref{theorem:main}]
($\Rightarrow$) First, assume that $G$ is a planar ribbon graph. Our goal is to prove  for any $D\in \Div^0(G)$ that $D_x=D_y$ as elements of $\Aut(\T)$ for arbitrary vertices $x$ and $y$.
Since $G$ is connected, it suffices to prove this when $x$ and $y$ are adjacent, with the general case following by induction.

Fix adjacent vertices $x$ and $y$. The group $\Div^0(G)$ is generated by divisors of the form $z-x$ as $z$ ranges over the vertices of $G$. But we proved in Corollary~\ref{corollary:fulladjacent} that $(z-x)_x=(z-x)_y$ when $x$ and $y$ are adjacent. It follows that $D_x=D_y$ for any $D\in \Div^0(G)$. This completes the proof that if $G$ is a planar ribbon graph, the action of $\Pic^0(G)$ on $\T$ is independent of the basepoint.

($\Leftarrow$) Conversely, assume that the action of $\Pic^0(G)$ on $\T$ is independent of the basepoint.
%Consider an arbitrary directed cycle $C$ in $G$. 
Choose any edge $e$ of $G$ with endpoints $x$ and $y$. We claim that  for any $T\in \T$ and any $D\in \Div^0(G)$, the cycle $C_e(T)$ is reversible if and only if the cycle $C_e(D(T))$ is reversible. (We have dropped the subscripted basepoints, since by assumption the action of $\Pic^0(G)$ does not depend on them.) We first prove the claim in the case when $D=z-y$ for some $z$.
 
As in the proof of Corollary~\ref{corollary:fulladjacent}, let $s$ be the first vertex on $\gamma_T(x,y)$ reached by the rotor-routing process starting with $(T_x,z)$. Since we have assumed independence of basepoint, this process $(T_x,z)\leadsto_s (T'_s,s)$ computes $T'=(z-s)(T)$. Since this process does not reach  $\gamma_T(x,y)$ until the last step, the cycle $C_e(T)$ remains unchanged, i.e. $C_e(T)=C_e((z-s)(T))$.

We may now apply Lemma~\ref{lemma:ongeodesic} to $(z-s)(T)$, since $s$ lies on the cycle $C_e(T)=C_e((z-s)(T))$. The condition \eqref{eq:ongeodesic} is always satisfied by our assumption that the action is independent of the basepoint, so applying Lemma~\ref{lemma:ongeodesic}(b) implies that \begin{align*}&C_e(T)=C_e((z-s)(T))\text{ is reversible}\\\iff\qquad &C_e\big(((s-y)+(z-s))(T)\big)=C_e((z-y)(T))\text{ is reversible,}
\end{align*} 
completing the proof of the claim for $D=z-y$. Since $\Div^0(G)$ is generated by such elements, the claim follows.

We now show that every cycle $C$ in $G$ is reversible. Given a cycle $C$, let $e$ be any edge of $C$. Extend the edge $e$ to a spanning tree $T$, and separately extend the path $C-e$ to a spanning tree $T'$. 
Since $\Pic^0(G)$ acts transitively on $\T$, there exists $D\in \Div^0(G)$ for which $D(T)=T'$. Thus the claim above shows that $C_e(T)$ is reversible if and only if $C_e(T')$ is reversible. But $C_e(T)$ is the trivial cycle $e\sqcup \ov{e}$, which is always reversible in any ribbon graph, while $C_e(T')$ is our original cycle $C$. We conclude that every cycle in $G$ is reversible. By Proposition~\ref{prop:planar_is_reversible}, $G$ is a planar ribbon graph.
\end{proof}

\para{Acknowledgements} We are very grateful to Jordan Ellenberg for calling our attention to the question considered in this paper, and we thank Math Overflow for providing a venue for the question. The first and third authors would also like to thank the second author for suggesting that the three of us get together to work on this problem, which was great fun!

\begin{tabular}{lll}
Department of Mathematics&
Department of Mathematics&
Department of Computer Science\\
Harvard University&
Stanford University&
University of Toronto\\
&&Sandford Fleming Building\\
One Oxford Street&
450 Serra Mall&
10 King's College Road\\
Cambridge, MA 02138&
Stanford, CA 94305&
Toronto, Ontario M5S 3G4 \\
\href{mailto:mtchan@math.harvard.edu}{\nolinkurl{mtchan@math.harvard.edu}}&
\href{mailto:church@math.stanford.edu}{\nolinkurl{church@math.stanford.edu}}&
\href{mailto:jgrochow@cs.toronto.edu}{\nolinkurl{jgrochow@cs.toronto.edu}}
\end{tabular}

%\noindent
%Department of Mathematics\\
%Harvard University\\
%One Oxford Street\\
%Cambridge, MA 02138\\
%E-mail: mtchan@math.harvard.edu
%\medskip
%
%\noindent
%Department of Mathematics\\
%Stanford University\\
%450 Serra Mall\\
%Stanford, CA 94305\\
%E-mail: church@math.stanford.edu
%\medskip
%
%\noindent
%Department of Computer Science\\
%University of Toronto\\
%Sandford Fleming Building\\
%10 King's College Road\\
%Toronto, Ontario M5S 3G4 \\
%E-mail: jgrochow@cs.toronto.edu

\end{document}